\newtheorem{theorem}[subsection]{Theorem}
\newtheorem{lemma}[subsection]{Lemma}
\newtheorem{proposition}[subsection]{Proposition}
\newtheorem{corollary}[subsection]{Corollary}
\theoremstyle{definition}
\newtheorem{remark}[subsection]{Remark}
\newtheorem{definition}[subsection]{Definition}
\numberwithin{equation}{section}
\newcommand{\al}{\alpha}
\newcommand{\lam}{\lambda}
\DeclareMathOperator{\Ker}{\mathsf {Ker}}
\DeclareMathOperator{\Img}{\mathsf {Im}}
\DeclareMathOperator{\id}{id}
\begin{document}
	\title[Some generalisations of Schur's Theorem]{Some generalisations of Schur's and Baer's theorem and their connection with homological algebra}
	
	\author{Guram Donadze}
	\address[Guram Donadze]{Department of Mathematics, University of Brasilia, Brasilia-DF,70910-900 Brazil
	\newline and \newline
		Institute of Cybernetics of the Georgian Technical University, Sandro Euli Str. 5, 0186, Tbilisi, Georgia}
	\email{gdonad@gmail.com}

	\author{Xabier Garc\'ia-Mart\'inez}
\address[Xabier Garc\'ia-Mart\'inez]{Departamento de Matem\'aticas, Esc.\ Sup.\ de Enx.\ Inform\'atica, Campus de Ourense, Universidade de Vigo, E-32004, Ourense, Spain
	\newline and\newline
	Faculty of Engineering, Vrije Universiteit Brussel, Pleinlaan 2, B-1050 Brussel, Belgium}
\email{xabier.garcia.martinez@uvigo.gal}

	\thanks{This work was supported by Ministerio de Econom\'ia y Competitividad (Spain), with grant number MTM2016-79661-P. The second author is a Postdoctoral Fellow of the Research Foundation--Flanders (FWO)}

\begin{abstract}
	Schur's Theorem and its generalisation, Baer's Theorem, are distinguished results in group theory, 
	connecting the upper central quotients with the lower central series. 
	The aim of this paper is to generalise these results in two different directions, using novel methods related with the non-abelian tensor product. 
	In particular, we prove a version of Schur-Baer Theorem for finitely generated groups.
	Then, we apply these newly obtained results to describe the $k$-nilpotent multiplier, for $k\geq 2$, and other invariants of groups.
\end{abstract}

\subjclass[2010] {20F14, 20J05, 18G10, 18G50}

\keywords{Non-abelian tensor product, Schur multiplier, Schur's Theorem, Baer's Theorem, $k$-nilpotent multiplier}

\maketitle

\section{Introduction}\label{Intr}

Given a group $G$, Schur~\cite{Schur} was one of the first authors that found a relation between the central factor group $G/Z(G)$ and the derived subgroup $G'$. In fact, the renowned theorem that carries his name says that, if $G/C$ is a finite group, where~$C \leq Z(G)$, then $G'$ is finite. Even though the authorship of this result is a bit fuzzy (see~\cite{DKP} for details), it is commonly known as Schur's Theorem. It has been actively studied and generalised in many different directions. For instance, if the theorem still holds replacing the class of finite groups by some other one, we say that it is a \emph{Schur class}.

Baer~\cite{Baer} proved a broader version of Schur's Theorem stating that if for a natural number $k$ the quotient $G/Z_{k}(G)$ is finite, then $\gamma_{k+1}(G)$ is finite. This result is commonly known as Baer's Theorem. The aim of the present paper is to give two generalisations of Baer's Theorem involving group extensions. To do so, we make use of the non-abelian tensor product introduced by Brown and Loday~\cite{BL, BrLo} in the context of homotopy theory. 

The first direction of our generalisations relates different properties of a group $G$ with the properties
of $\gamma_{n+1}(H)$, for all extensions $p\colon H \to G$ where the kernel of $p$ is inside the $n$-th
centre of $H$ (see Proposition~\ref{prop1} and Proposition~\ref{prop2}). On the other hand, the second direction of our generalisations connects the properties
of a group $G$ with the properties of the derived series of $H$, for all extensions $p\colon H \to G$ where
the kernel has some special property related with commutators (see Proposition~\ref{prop3} and Proposition~\ref{prop4}).

An interesting application of these results becomes visible when we apply them to the free presentation of a group. In this case, we are able to deduce properties of the presentation by the properties of the original group, with applications to non-abelian 
homological algebra.

The paper is organised as follows. In Section~\ref{S:nonabelian} we recall some important definitions and results concerning the non-abelian tensor product that will be used later. In Section~\ref{S:Baer} we state and prove the two generalisations of Baer's Theorem. Finally, in Section~\ref{S:Homological} we apply the obtained results to describe the left derived functors of certain
class of group valued functors. In particular, we get an effective description of the $k$-nilpotent multipliers, for $k\geq 2$,
and other invariants of groups in terms of the non-abelian tensor product.  

\section{The non-abelian tensor product}\label{S:nonabelian}

We begin by recalling some important definitions and stating some known results that will be useful in the upcoming sections.

First of all, we should fix some notation. Given a group G, by $\gamma_n(G)$
and $Z_n(G)$ we mean the $n$-th term of the lower central series and upper central series, respectively.
Moreover, we denote by $\Gamma_n(G)$ the $(n-1)$-th term of the derived series, i.e., 
${\Gamma_1(G)=G}$ and $\Gamma_{n+1}(G)=[ \Gamma_n(G) , \Gamma_n(G)]$, for $n\geq 1$.  
By the commutator of two elements $[g, h]$ we mean $ghg^{-1}h^{-1}$.
Note that we write the conjugation operation on the left, so $^gg' = gg'g^{-1}$, whenever~$g, g'\in G$. 
In fact, we are considering left actions in general.

The non-abelian tensor product of groups is defined for a pair of groups that act on each other. A standard requirement for these actions is some sense of \emph{compatibility}~\cite{BL}.

\begin{definition} Let $G$ and $H$ be groups acting on each other. The mutual actions are said to be \emph{compatible} if
	\[
	^{(^hg)}h' = \; ^{h}(^{g}(^{h^{-1}} h'))  \; \text{and}\; ^{(^gh)}g' = \; ^{g}(^{h}(^{g^{-1}} g')),
	\]
	for each $g, g' \in G$ and $h, h' \in H$. 
\end{definition}

\begin{definition} Let $G$ and $H$ be two groups that act compatibly on each other. Then the \emph{non-abelian tensor product}
	$G\otimes H$ is the group generated by the symbols $g\otimes h$ for $g\in G$ and $h\in H$ with relations
	\begin{align*}
		gg' \otimes h &= (^gg' \otimes \;^gh)(g\otimes h), \\
		g\otimes hh' &= (g\otimes h)(^hg \otimes \;^hh'),
	\end{align*}
	for each $g, g' \in G$ and $h, h' \in H$. 
\end{definition}

The special case where $G = H$ and all actions are given by conjugation, is called the \emph{tensor square} $G\otimes G$.  

\begin{definition}\label{D:exterior} Let $G$ be a group acting on itself via conjugation. The \emph{non-abelian exterior square} $G\wedge G$ is the group generated by the symbols 
	$g\wedge h$ for $g, h\in G$ with relations
	\begin{align*}
		gg' \wedge h &= (^gg' \wedge \;^gh)(g\wedge h), \\
		g\wedge hh' &= (g\wedge h)(^hg \wedge \;^hh'), \\
		g\wedge g &= 1,
	\end{align*}
	for each $g, g', h, h' \in G$. 
\end{definition}
A more general construction of the non-abelian exterior product for arbitrary crossed modules can be found in~\cite{BrLo}.

We have the homomorphism $\mu^G\colon G\wedge G \to G$ given by $g\wedge h \mapsto [g, h]$, for every~$g, h\in G$. The following theorem shows the importance of this homomorphism in homology theory (see~\cite{Ellis2} and \cite{Miller}).

\begin{theorem}\label{thm4} Let $H_2(G)$ denote the second homology group with integer coefficients. Then
	\[
	H_2(G) \cong \Ker \big\{ \mu^G \colon G\wedge G \to G \big\}.
	\]
\end{theorem}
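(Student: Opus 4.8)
The plan is to compute the kernel of $\mu^G$ by passing to a free presentation and invoking Hopf's classical formula for the second homology. Fix a free presentation $1 \to R \to F \xrightarrow{\pi} G \to 1$ with $F$ free, so that Hopf's formula reads
\[
H_2(G) \cong \frac{R \cap [F,F]}{[F,R]}.
\]
The whole argument then hinges on producing a natural isomorphism
\[
G \wedge G \cong \frac{[F,F]}{[F,R]}
\]
under which $\mu^G \colon G \wedge G \to G$ corresponds to the homomorphism induced by the composite $[F,F] \hookrightarrow F \xrightarrow{\pi} G$.

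To build this isomorphism I would first define $\psi \colon G \wedge G \to [F,F]/[F,R]$ on generators by $\psi(g \wedge h) = [\tilde g, \tilde h]\,[F,R]$, where $\tilde g, \tilde h \in F$ are any lifts of $g, h$ along $\pi$. Since $R \trianglelefteq F$ forces $[F,R] \trianglelefteq F$, the standard commutator identity $[ab,c] = {}^a[b,c]\,[a,c]$ shows both that the value is independent of the chosen lifts (two lifts differ by an element of $R$, and $[r, \tilde h] \in [F,R]$) and that the three defining relations of Definition~\ref{D:exterior} are respected modulo $[F,R]$; the relation $g \wedge g = 1$ simply maps to $[\tilde g, \tilde g] = 1$. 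This $\psi$ is visibly surjective, as every generator $[x,y]\,[F,R]$ equals $\psi(\pi(x) \wedge \pi(y))$.

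The harder half is injectivity, for which I would exhibit an inverse $\phi \colon [F,F]/[F,R] \to G \wedge G$ sending each commutator coset $[x,y]\,[F,R]$ to $\pi(x) \wedge \pi(y)$, and then check $\phi \circ \psi = \id$ and $\psi \circ \phi = \id$ on generators. With the isomorphism established, an element $w\,[F,R]$ lies in $\Ker\{\mu^G\}$ precisely when $w \in [F,F]$ and $\pi(w) = 1$, that is $w \in R \cap [F,F]$; hence
\[
\Ker\{\mu^G \colon G \wedge G \to G\} \cong \frac{R \cap [F,F]}{[F,R]} \cong H_2(G),
\]
the last step being Hopf's formula, and the result follows.

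I expect the main obstacle to be the construction and well-definedness of the inverse map $\phi$ — equivalently, the injectivity of $\psi$. Producing a genuine homomorphism out of $[F,F]$ from the prescription on commutators, and verifying that it annihilates $[F,R]$, requires careful commutator calculus, since commutators behave bilinearly only up to correction terms that must be checked to land in $[F,R]$. One should also confirm that the resulting identification is independent of the chosen free presentation, so that the isomorphism $\Ker \mu^G \cong H_2(G)$ is canonical.
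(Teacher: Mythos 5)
The paper offers no proof of this statement: Theorem~\ref{thm4} is quoted as a known result with references to \cite{Ellis2} and \cite{Miller}, so there is nothing internal to compare against line by line. Your strategy --- identify $G\wedge G$ with $[F,F]/[F,R]$ for a free presentation $1\to R\to F\to G\to 1$ and then read off $\Ker\mu^G$ via Hopf's formula --- is the standard route, and the isomorphism you are after is exactly the case $n=1$ of \cite[Theorem~2.1]{BlFuMo}, which the paper itself invokes later in the proof of Proposition~\ref{prop4}. Your map $\psi$ is correctly defined, and the checks of independence of lifts and compatibility with the relations of Definition~\ref{D:exterior} are routine and correct; so unconditionally you obtain a surjection $\psi\colon G\wedge G\to [F,F]/[F,R]$ commuting with the maps to $G$, hence an epimorphism $\Ker\mu^G\to (R\cap[F,F])/[F,R]\cong H_2(G)$.

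The genuine gap is the injectivity of $\psi$, which you flag as ``the main obstacle'' but do not resolve; unfortunately that obstacle is the entire mathematical content of the theorem. The prescription $[x,y]\,[F,R]\mapsto \pi(x)\wedge\pi(y)$ is only given on commutators, and $[F,F]$ is not free on the set of commutators: an element of $[F,F]$ admits many expressions as a product of commutators, and one must show that every identity among commutators holding in the free group $F$ is a consequence of the defining relations of the exterior square. That is precisely Miller's theorem on relations among commutators \cite{Miller}, and no amount of termwise commutator calculus on individual generators substitutes for it. The natural alternative --- apply the exterior analogue of Theorem~\ref{thm1} to the presentation to get $G\wedge G\cong (F\wedge F)/\overline{R}$ and then use that $\mu^F\colon F\wedge F\to [F,F]$ is an isomorphism for free $F$ --- merely relocates the same difficulty, since the injectivity of $\mu^F$ is (by your own reduction) equivalent to the theorem for free groups together with $H_2(F)=0$. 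So the proposal is a correct reduction of the statement to the key lemma $G\wedge G\cong [F,F]/[F,R]$, but it does not prove that lemma; to complete the argument you should either reproduce Miller's argument or cite \cite{Miller} or \cite[Theorem~2.1]{BlFuMo} for it explicitly. (The concern about independence of the chosen presentation is secondary: once the isomorphism exists for one presentation, the standard comparison of free presentations handles naturality.)
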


Let us also recall the behaviour of the non-abelian tensor square with short exact sequences.

\begin{theorem}[\cite{BL}]\label{thm1} Let $1\to N \to H \to G \to 1$ be an extension of groups. Then there is
	an exact sequence $1 \to \overline {N} \to H\otimes H \to G\otimes G \to 1$, where $\overline{N}$ is the normal subgroup 
	of $H\otimes H$ generated by the elements of the form $h\otimes x$ and $x\otimes h$ for each~$h\in H$ and
	$x\in N$.
\end{theorem}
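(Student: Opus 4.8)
The plan is to show that the projection $p\colon H\to G$ induces a surjective homomorphism $\psi\colon H\otimes H\to G\otimes G$ whose kernel is exactly $\overline N$. First I would observe that $p$ is equivariant for the conjugation actions, ${}^{p(h)}p(h')=p({}^{h}h')$, so the assignment $h\otimes h'\mapsto p(h)\otimes p(h')$ respects both defining relations of the tensor square and therefore determines a well-defined homomorphism $\psi$. Since $p$ is surjective, every generator $g\otimes g'$ of $G\otimes G$ lies in the image of $\psi$, so $\psi$ is surjective.

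Next I would record the elementary identities $g\otimes 1=1=1\otimes g$, obtained by setting $h=h'=1$ (respectively $g=g'=1$) in the defining relations and cancelling. Using these, $\psi(h\otimes x)=p(h)\otimes p(x)=p(h)\otimes 1=1$, and likewise $\psi(x\otimes h)=1$, for all $h\in H$ and $x\in N$. As $\Ker\psi$ is a normal subgroup containing all of these generators, it contains their normal closure $\overline N$, which gives the inclusion $\overline N\subseteq\Ker\psi$.

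The crux is the reverse inclusion, which I would establish by producing an inverse to the induced map $\overline{\psi}\colon (H\otimes H)/\overline N\to G\otimes G$. For $g,g'\in G$ choose lifts $\tilde g,\tilde g'\in H$ and set $\phi(g\otimes g')=\overline{\tilde g\otimes\tilde g'}$. The key point, and the main obstacle, is to verify that $\phi$ is well defined and a homomorphism. Independence of the chosen lifts follows because any two lifts of $g$ differ by an element $x\in N$, and the relation $(\tilde g x)\otimes\tilde g'=\bigl({}^{\tilde g}x\otimes{}^{\tilde g}\tilde g'\bigr)(\tilde g\otimes\tilde g')$, together with the normality of $N$, shows that the correction factor ${}^{\tilde g}x\otimes{}^{\tilde g}\tilde g'$ is a generator of $\overline N$; the symmetric computation handles the second variable. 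To see that $\phi$ respects the defining relations of $G\otimes G$, I would take the lift of a product $gg'$ to be $\tilde g\tilde g'$ and push the corresponding relations of $H\otimes H$ down modulo $\overline N$, again using the equivariance of $p$.

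Finally, both composites $\overline{\psi}\circ\phi$ and $\phi\circ\overline{\psi}$ act as the identity on generators, so $\overline{\psi}$ is an isomorphism and hence $\Ker\psi=\overline N$, yielding the exact sequence $1\to\overline N\to H\otimes H\to G\otimes G\to 1$. I expect the verification that $\phi$ is a well-defined homomorphism to be the only genuinely delicate step, since it is precisely there that the generators of $\overline N$ must be shown to absorb exactly the ambiguity coming from the choice of lifts.
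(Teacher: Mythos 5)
The paper does not prove this statement at all: it is quoted from Brown--Loday \cite{BL}, where it arises in a topological/crossed-module context. Your direct algebraic argument is therefore a genuinely different (and more self-contained) route, and it is correct. The skeleton is the standard one for such kernel computations: equivariance of $p$ gives the well-defined surjection $\psi$; the identities $g\otimes 1=1=1\otimes g$ give $\overline N\subseteq\Ker\psi$; and the reverse inclusion comes from the lifted map $\phi\colon G\otimes G\to (H\otimes H)/\overline N$. Your treatment of the delicate step is right: two lifts of $g$ differ by $\tilde g\mapsto\tilde g x$ with $x\in N$, and the defining relation $\tilde g x\otimes\tilde g'=\bigl({}^{\tilde g}x\otimes{}^{\tilde g}\tilde g'\bigr)(\tilde g\otimes\tilde g')$ exhibits the discrepancy as a generator of $\overline N$ because ${}^{\tilde g}x\in N$ by normality; once lift-independence is known, you are free to use the lift $\tilde g\tilde g'$ of $gg'$ and ${}^{\tilde g}\tilde h$ of ${}^{g}h$ when checking that the relators of $G\otimes G$ die in $(H\otimes H)/\overline N$, which is exactly the von Dyck argument you sketch. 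Two cosmetic points: the identities $g\otimes1=1=1\otimes g$ come from setting $g'=1$ (resp.\ $h'=1$) in the two defining relations, not $h=h'=1$; and injectivity of $\overline N\to H\otimes H$ is automatic since $\overline N$ is by definition a subgroup, so exactness reduces precisely to $\Ker\psi=\overline N$, as you say. What your approach buys is an elementary, purely group-theoretic proof of a fact the paper treats as a black box; what the citation buys the authors is brevity and the full strength of the Brown--Loday machinery, of which this exact sequence is a small consequence.
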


The non-abelian tensor product preserves some interesting properties. We present some of them as an example (see~\cite{Ellis, Mor}):

\begin{theorem}\label{thm2} Let $G$ and $H$ be groups acting compatibly on each other.
	\begin{enumerate}
		\item[\rm (i)] if $G$ and $H$ are finite (or $p$-groups), then $G\otimes H$ is finite (or a $p$-group);
		\item[\rm (ii)] if $G$ and $H$ are perfect groups, then $G \otimes H$ is perfect;
		\item[\rm (iii)] if $G$ and $H$ are polycyclic, then $G\otimes H$ is polycyclic. 
	\end{enumerate}
\end{theorem}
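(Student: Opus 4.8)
The plan is to exploit the crossed-module structure of the non-abelian tensor product. Recall that the mutual actions give rise to two standard homomorphisms $\kappa\colon G\otimes H\to G$ and $\kappa'\colon G\otimes H\to H$, determined on generators by the commutator maps $\kappa(g\otimes h)=g\,{}^{h}g^{-1}$ and $\kappa'(g\otimes h)={}^{g}h\,h^{-1}$, where $G$ and $H$ act on $G\otimes H$ diagonally via ${}^{a}(g\otimes h)={}^{a}g\otimes{}^{a}h$. The Peiffer identity ${}^{\kappa(s)}t=sts^{-1}$ shows that $\Ker\kappa$, and a fortiori $C\coloneqq\Ker\kappa\cap\Ker\kappa'$, lies in the centre of $G\otimes H$. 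Since $(\kappa,\kappa')$ maps $G\otimes H$ into $G\times H$, we obtain a central extension
\[
1\to C\to G\otimes H\to Q\to 1,\qquad Q\coloneqq\Img(\kappa,\kappa')\le G\times H .
\]
This single exact sequence drives the proofs of (i) and (iii): the quotient $Q$, being a subgroup of $G\times H$, automatically inherits finiteness, the $p$-group property and polycyclicity, so in each case the task is reduced to controlling the central subgroup $C$.

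For (i), I first observe that $G\otimes H$ is generated by the finitely many symbols $g\otimes h$, hence is finitely generated; as $Q$ is finite, $C$ has finite index and is therefore finitely generated (Reidemeister--Schreier), and it is abelian because it is central. It remains to see that $C$ is finite, and the crux here is that $C$ is a homomorphic image of the ordinary abelian tensor product $G^{\mathrm{ab}}\otimes_{\mathbb Z}H^{\mathrm{ab}}$, which is finite. A finitely generated abelian group of finite exponent is finite, so $C$, and hence $G\otimes H$, is finite. The $p$-group case is identical: $G^{\mathrm{ab}}\otimes_{\mathbb Z}H^{\mathrm{ab}}$ is then a finite abelian $p$-group, whence $C$ is a finite $p$-group, and an extension of a $p$-group by a $p$-group is again a $p$-group.

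For (iii), the same central extension is available and $Q\le G\times H$ is polycyclic, so two things must be established: that $G\otimes H$ is finitely generated and that $C$ is finitely generated. Finite generation of $G\otimes H$ follows by induction on the lengths of $g$ and $h$ in fixed finite generating sets of $G$ and $H$, using the defining relations to rewrite $gg'\otimes h$ and $g\otimes hh'$ in terms of shorter tensors and their conjugates. Once this is known, the central extension with polycyclic quotient $Q$ forces $C$ to be finitely generated, by the theorem of P.\ Hall that a finitely generated central-by-polycyclic group is polycyclic; then $C$ is finitely generated abelian, hence polycyclic, and $G\otimes H$ is polycyclic as an extension of the polycyclic group $Q$ by the polycyclic group $C$.

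For (ii) the central extension is no longer the right instrument, and instead I would compute the abelianisation directly. Passing to $(G\otimes H)^{\mathrm{ab}}$, the two defining relations degenerate into $\mathbb Z$-bilinearity, the conjugation actions disappearing because, through $\kappa$ and $\kappa'$, they are realised by inner automorphisms of $G\otimes H$; this yields a surjection from a bilinear object built out of $G^{\mathrm{ab}}$ and $H^{\mathrm{ab}}$ onto $(G\otimes H)^{\mathrm{ab}}$. Since $G$ and $H$ are perfect, $G^{\mathrm{ab}}=H^{\mathrm{ab}}=0$, forcing $(G\otimes H)^{\mathrm{ab}}=0$, i.e.\ $G\otimes H$ is perfect. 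The main obstacle throughout is the control of the central kernel $C$: its finiteness in (i) and its finite generation in (iii) are the genuinely non-formal inputs, the former resting on the identification of $C$ as a quotient of $G^{\mathrm{ab}}\otimes_{\mathbb Z}H^{\mathrm{ab}}$.
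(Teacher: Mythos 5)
There is a genuine gap, and it sits exactly at what you yourself call the crux. First, for context: the paper does not prove Theorem~\ref{thm2} at all --- it quotes (i)--(ii) from Ellis and (iii) from Moravec --- so your argument has to stand on its own. Your setup is correct: the Peiffer identity for the crossed module $\kappa\colon G\otimes H\to G$ does make $\Ker\kappa$, hence $C=\Ker\kappa\cap\Ker\kappa'$, central, and $Q\leq G\times H$ inherits finiteness, the $p$-group property and polycyclicity; this central extension with finite-index centre is indeed how Ellis's proof of (i) begins. But the assertion that $C$ is a homomorphic image of $G^{\mathrm{ab}}\otimes_{\mathbb Z}H^{\mathrm{ab}}$ is false, and the mechanism you invoke for the analogous claim in (ii) --- that the conjugation actions ``disappear'' in the abelianisation because they are realised by inner automorphisms --- is precisely where things break: the Peiffer identity only makes the actions of $\Img\kappa=D_H(G)$ and $\Img\kappa'=D_G(H)$ inner, not the actions of all of $G$ and $H$, so the defining relations do not become $\mathbb Z$-bilinear modulo commutators. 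Concretely, $(S_3\otimes S_3)^{\mathrm{ab}}$ surjects onto $S_3\wedge S_3\cong\mathbb Z_3$, so it cannot be a quotient of anything built bilinearly from $S_3^{\mathrm{ab}}\cong\mathbb Z_2$; and for $G=H$ the free nilpotent group of class two and rank two one has $G\otimes G\cong\mathbb Z^6$ with $\Img\kappa=G'\cong\mathbb Z$, so $C\cong\mathbb Z^5$, which is not a quotient of $G^{\mathrm{ab}}\otimes_{\mathbb Z}G^{\mathrm{ab}}\cong\mathbb Z^4$ (the same phenomenon occurs for finite groups, e.g.\ extraspecial $p$-groups of order $p^3$). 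Thus (i) and (ii) are unproved as written: the genuinely nontrivial input --- that each generator $g\otimes h$ has finite order modulo $[G\otimes H,G\otimes H]$ in case (i), and lies in $[G\otimes H,G\otimes H]$ in case (ii) --- is exactly what is missing, and establishing it is where the compatibility conditions must enter.

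Part (iii) has an independent gap: finite generation of $G\otimes H$ cannot be obtained by induction on word length, and is in fact false for finitely generated groups in general --- for $F$ free of rank two acting on itself by conjugation, $D_F(F)=[F,F]$ is not finitely generated and neither is $F\otimes F$, which is exactly why Theorem~\ref{thm3} characterises finite generation through the derivatives rather than through generation of $G$ and $H$. For polycyclic $G$ and $H$ the conclusion can be rescued, since every subgroup of a polycyclic group is finitely generated, so $D_H(G)$ and $D_G(H)$ are and Theorem~\ref{thm3} applies; with that repair, your appeal to the central extension and to P.~Hall's theorem on finitely generated abelian-by-polycyclic groups does complete (iii). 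I would suggest either citing these results as the paper does, or reworking (i) and (ii) along Ellis's lines: keep your central extension, use Schur's theorem to make $(G\otimes H)'$ finite, and then prove finiteness of the abelianisation directly from the defining relations instead of asserting the surjection from $G^{\mathrm{ab}}\otimes_{\mathbb Z}H^{\mathrm{ab}}$.
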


In order to study the preservation of finitely generated subgroups, we will need the notion of derivative~\cite{Visscher}.

\begin{definition} Let $G$ and $H$ be groups with $H$ acting of $G$. The \emph{derivative} of $G$ by $H$ is the subgroup of $G$ defined by 
	\[
	D_H(G)=\big\langle g\; ^h\!g^{-1} \mid g\in G, h\in H\big\rangle.
	\]
\end{definition}

\begin{theorem}[\cite{DLT}]\label{thm3} Let $G$ and $H$ be finitely generated groups acting compatibly on each other. 
	Then $G\otimes H$ is finitely generated if and only if $D_G(H)$ and $D_H(G)$ are finitely generated.
\end{theorem}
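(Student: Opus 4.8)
The plan is to build the argument around the two natural crossed-module homomorphisms attached to the tensor product and the centrality of their kernels. Define $\lambda\colon G\otimes H\to G$ by $\lambda(g\otimes h)=g\,{}^{h}\!g^{-1}$ and $\lambda'\colon G\otimes H\to H$ by $\lambda'(g\otimes h)={}^{g}h\,h^{-1}$. A direct check on the two defining relations (using compatibility) shows these are homomorphisms, and by the very definition of the derivative their images are $\Img\lambda=D_{H}(G)$ and $\Img\lambda'=D_{G}(H)$. This already settles the ``only if'' direction: if $G\otimes H$ is finitely generated, then so are its homomorphic images $D_{H}(G)$ and $D_{G}(H)$.

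For the converse, the key structural input is that $\lambda$ and $\lambda'$ are crossed modules (Brown--Loday), so that $\Ker\lambda$ and $\Ker\lambda'$ are central in $G\otimes H$; in particular $K:=\Ker\lambda\cap\Ker\lambda'$ is a central, hence abelian, subgroup. From $\lambda$ we obtain a central extension
\[
1\longrightarrow \Ker\lambda \longrightarrow G\otimes H \xrightarrow{\ \lambda\ } D_{H}(G)\longrightarrow 1,
\]
whose quotient is finitely generated by hypothesis. Since a group with a finitely generated normal subgroup and a finitely generated quotient is itself finitely generated, the problem reduces to showing that $\Ker\lambda$ is finitely generated. Restricting $\lambda'$ to $\Ker\lambda$ gives a further extension
\[
1\longrightarrow K \longrightarrow \Ker\lambda \xrightarrow{\ \lambda'\ } \lambda'(\Ker\lambda)\longrightarrow 1
\]
with $\lambda'(\Ker\lambda)\leq D_{G}(H)$ abelian, so it remains to control $K$ together with the abelian image $\lambda'(\Ker\lambda)$.

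The heart of the matter, and the step I expect to be the main obstacle, is the finite generation of the central kernel $K$. Here I would exploit the $G\times H$-action on $G\otimes H$, diagonal on the two tensor slots, together with the bi-multiplicativity relations, which show that $G\otimes H$ is generated \emph{as a $G\times H$-group} by the finite set $\{x_{i}\otimes y_{j}\}$, where the $x_{i}$ and the $y_{j}$ generate $G$ and $H$. The conjugation formula $w\,z\,w^{-1}={}^{(\lambda w,\lambda' w)}z$ shows that the action of $L:=\Img(\lambda,\lambda')\trianglelefteq G\times H$ on $G\otimes H$ is realised by inner automorphisms, hence is trivial on the central subgroup $K$. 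Consequently $K$ is naturally a module over the group ring $\mathbb{Z}[Q]$, with $Q=(G\times H)/L$, and it is finitely generated as such. The difficulty is to upgrade ``finitely generated as a $\mathbb{Z}[Q]$-module'' to ``finitely generated as an abelian group'': this is exactly where the hypotheses that $D_{H}(G)$ and $D_{G}(H)$ be finitely generated, and hence the way $Q$ acts through the tensor relations, must be fed in, ruling out the kind of Laurent-polynomial behaviour that would otherwise destroy finite generation.

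Finally, the residual point that $\lambda'(\Ker\lambda)$ is finitely generated should follow from the same analysis, since it is an abelian section of $D_{G}(H)$ cut out by the central kernel. Assembling the three extensions then yields that $G\otimes H$ is finitely generated, completing the ``if'' direction. The only genuinely delicate estimate is the one controlling $K$; everything else is a formal consequence of the crossed-module structure and the permanence of finite generation under extensions.
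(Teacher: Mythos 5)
This statement is quoted in the paper from \cite{DLT} without proof, so there is no internal argument to compare against; judged on its own terms, your proposal is not a complete proof. The ``only if'' direction is fine: the Brown--Loday maps $\lambda,\lambda'$ are indeed homomorphisms with images $D_H(G)$ and $D_G(H)$, so finite generation passes to them. The reduction skeleton for the converse is also sound as far as it goes (an extension of a finitely generated group by a finitely generated group is finitely generated, and $\Ker\lambda$ is central by the crossed-module property). But the entire content of the theorem lies in the step you explicitly defer: proving that the central kernel is finitely generated. You name this as ``the main obstacle'' and describe the kind of argument that ``must be fed in'' without supplying it, so the hard direction is a plan, not a proof.

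Moreover, two of the assertions you use to frame that final step are themselves unjustified. First, the claim that $K=\Ker\lambda\cap\Ker\lambda'$ is finitely generated as a $\mathbb{Z}[Q]$-module does not follow from $G\otimes H$ being generated as an operator group by the finite set $\{x_i\otimes y_j\}$: central (or normal) subgroups of finitely generated groups need not be finitely generated, even as modules over the quotient, and no mechanism is given that produces a finite module generating set for $K$. Second, you treat $\lambda'(\Ker\lambda)$ as unproblematic because it is an abelian subgroup of the finitely generated group $D_G(H)$; but abelian subgroups of finitely generated groups can fail to be finitely generated (the base of $\mathbb{Z}\wr\mathbb{Z}$ is the standard example), so ``abelian section of $D_G(H)$'' gives nothing. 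The published proof in \cite{DLT} has to work considerably harder, exploiting identities such as $(a\otimes b)\,{}^c(a\otimes b)^{-1}=a\,{}^ba^{-1}\otimes c$ (the identity this paper quotes in Lemma~\ref{L:tech}) to control exactly these subgroups; without an argument of that kind your reduction leaves the theorem unproved.
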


\section{Baer's Theorem and other generalisations of Schur's Theorem}\label{S:Baer}

Let $G$ be a group and $G \otimes G$ be its tensor square. There is a well defined action of $G$ on $G \otimes G$ by 
\[
^{g_3}(g_1\otimes g_2)= \;^{g_3}g_1\otimes \;^{g_3}g_2,
\]
and there is also a well defined action of $G \otimes G$ on $G$ given by
\[
^{g_1\otimes g_2}g_3=\;^{[g_1, g_2]}g_3.
\]
In this way, we can define the non-abelian tensor product $(G \otimes G) \otimes G$, denoted by~$G^{\otimes 3}$. Furthermore, for any $n \geq 3$, we can inductively define the \emph{$n$-fold tensor product}, denoted by $G^{\otimes n}$, by considering the actions of $G$ and $G^{\otimes n-1}$ on each other defined by
\begin{align*}
	^{g_{n}}\big(\cdots ((g_1\otimes g_2)\otimes g_3) \otimes \cdots \otimes g_{n-1}\big) &=
	\big(\cdots ((^{g_{n}}g_1\otimes \;^{g_{n}}g_2)\otimes \;^{g_{n}}g_3) \otimes \cdots 
	\otimes \;^{g_{n}}g_{n-1}\big), \\
	^{(\cdots ((g_1\otimes g_2)\otimes g_3) \otimes \cdots \otimes g_{n-1})}g_{n} &=\;
	^{[\cdots [[g_1, g_2], g_3] , \cdots , g_{n-1}]}g_{n}.
\end{align*}
Note that all actions are compatible. See~\cite{Ellis3} for more details.

Besides that there is a well-defined homomorphism $\lam_n ^G \colon  G^{\otimes n} \to G$ defined on generators by 
\[
(\cdots ((g_1\otimes g_2)\otimes g_3) \otimes \cdots \otimes g_{n}) \mapsto [\cdots [[g_1,  g_2], g_3], \cdots , g_{n}].
\]
In particular, we have that
\[
\lam^{G}_{n+1}((\cdots ((g_1\otimes g_2)\otimes g_3) \otimes \cdots \otimes g_{n+1})) = 1, 
\]
whenever $g_i \in Z_n(G)$ for some $1 \leq i \leq n+1$. 

\begin{lemma} \label{lemma1} Let $1\to N \to H \to G \to 1$ be an extension of groups such that ${N\leq Z_n(H)}$ for a fixed 
	positive integer $n$. Then, there exists a surjective homomorphism $ G^{\otimes n+1} \to \gamma_{n+1}(H)$ making
	the following diagram commutative: 
	\[
	\xymatrix{
		G^{\otimes n+1} \ar[r]^{\id}\ar[d] & G^{\otimes n+1} \ar[d]^{\lam_{n+1}^G}\\
		\gamma_{n+1}(H) \ar[r]^{}&  G  }
	\]
\end{lemma}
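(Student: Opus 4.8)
The plan is to realise the desired map as a factorisation of the homomorphism $\lam^{H}_{n+1}\colon H^{\otimes n+1}\to H$ through the canonical projection induced by $p\colon H\to G$. First I would observe that, by functoriality of the $(n+1)$-fold tensor product, the surjection $p$ (which is equivariant for the actions on $H$ and the induced actions on $G$) gives a surjective homomorphism $\pi\colon H^{\otimes n+1}\to G^{\otimes n+1}$ sending a generator $(\cdots(h_1\otimes h_2)\otimes\cdots)\otimes h_{n+1}$ to $(\cdots(p(h_1)\otimes p(h_2))\otimes\cdots)\otimes p(h_{n+1})$. Since $\lam^{H}_{n+1}$ has image exactly $\gamma_{n+1}(H)$, it then suffices to prove that $\lam^{H}_{n+1}$ annihilates $\Ker\pi$; the induced map will be the required surjection $\phi\colon G^{\otimes n+1}\to\gamma_{n+1}(H)$.

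The crucial step is to describe $\Ker\pi$. Iterating Theorem~\ref{thm1} along the tower $H^{\otimes n+1}=H^{\otimes n}\otimes H$, one shows that $\Ker\pi$ is the normal subgroup of $H^{\otimes n+1}$ generated by those generators $(\cdots(h_1\otimes h_2)\otimes\cdots)\otimes h_{n+1}$ having at least one entry $h_i\in N$. I expect this identification to be the main obstacle, since Theorem~\ref{thm1} is stated only for tensor squares: carrying out the induction requires the analogous right-exactness statement for the mixed compatible actions occurring in $H^{\otimes n}\otimes H$, which I would either quote in its version for non-abelian tensor products of pairs of groups or verify directly by comparing the presentations of $H^{\otimes n+1}$ and $G^{\otimes n+1}$ (the generators with an $N$-entry are clearly sent to the identity by $\pi$, as any generator of $G^{\otimes n+1}$ with a trivial slot is itself trivial).

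Granting the description of $\Ker\pi$, the remaining computation is immediate. On a generating element with some $h_i\in N$ one has
\[
\lam^{H}_{n+1}\big((\cdots(h_1\otimes h_2)\otimes\cdots)\otimes h_{n+1}\big)=[\cdots[[h_1,h_2],h_3],\cdots,h_{n+1}].
\]
Because $N\leq Z_n(H)$, this is an iterated commutator of weight $n+1$ with one entry in $Z_n(H)$, hence trivial; this is precisely the vanishing property of $\lam^{H}_{n+1}$ recorded just before the statement, applied to $H$ in place of $G$. Therefore $\Ker\pi\subseteq\Ker\lam^{H}_{n+1}$, and $\lam^{H}_{n+1}$ factors uniquely as $\phi\circ\pi$ for a homomorphism $\phi\colon G^{\otimes n+1}\to\gamma_{n+1}(H)$. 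Surjectivity is then automatic, since $\Img\phi=\Img(\phi\circ\pi)=\Img\lam^{H}_{n+1}=\gamma_{n+1}(H)$ because $\pi$ is onto.

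Finally I would check commutativity of the square, that is $p\circ\phi=\lam^{G}_{n+1}$, where the bottom arrow is the restriction of $p$ to $\gamma_{n+1}(H)$. I verify this on a generator $x=(\cdots(g_1\otimes g_2)\otimes\cdots)\otimes g_{n+1}$ of $G^{\otimes n+1}$: choosing lifts $h_i\in H$ of the $g_i$ produces an element $\tilde{x}\in H^{\otimes n+1}$ with $\pi(\tilde{x})=x$, whence $\phi(x)=\lam^{H}_{n+1}(\tilde{x})=[\cdots[[h_1,h_2],h_3],\cdots,h_{n+1}]$; applying the homomorphism $p$ gives $[\cdots[[g_1,g_2],g_3],\cdots,g_{n+1}]=\lam^{G}_{n+1}(x)$. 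As the generators suffice, the diagram commutes, completing the argument.
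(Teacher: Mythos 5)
Your proposal is correct and follows essentially the same route as the paper: both identify the kernel of $H^{\otimes n+1}\to G^{\otimes n+1}$ via the (iterated) right-exactness of the non-abelian tensor product as the normal subgroup generated by tensors with an entry in $N$, observe that $\lam^{H}_{n+1}$ annihilates these because $N\leq Z_n(H)$, and so factor $\lam^{H}_{n+1}$ through $G^{\otimes n+1}$ onto $\gamma_{n+1}(H)$. Your explicit flagging of the need to upgrade Theorem~\ref{thm1} from tensor squares to the mixed products $H^{\otimes n}\otimes H$, and your verification of the commuting square, only make explicit steps the paper leaves implicit.
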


\begin{proof} By Theorem \ref{thm1} we have an exact sequence of groups
	\[
	1 \to \overset{n+1}{\underset{i=1}{\prod}}N_i \to H^{\otimes n+1} \to G^{\otimes n+1} \to 1,
	\]
	where $N_i$ is the normal subgroup of $ H^{\otimes n+1}$ generated by all the elements of the form 
	$(\cdots ((h_1\otimes h_2)\otimes h_3)\otimes \cdots \otimes h_{n+1})$ with $h_i\in N$ and $h_j\in H$ for $j\neq i$.
	Moreover, we have the following commutative diagram:
	\[
	\xymatrix{
		H^{\otimes n+1} \ar[r]^{}\ar[d]_{\lam_{n+1}^H} & G^{\otimes n+1} \ar[d]^{\lam_{n+1}^G}\\
		H \ar[r]^{}&  G  }
	\]
	Clearly, $\Img (\lam_{n+1}^H ) = \gamma_{n+1}(H)$. Then, since $N\leq Z_n(H)$, we have that $\lam^G_{n+1}(N_i)=1$
	for each $i=1, \ldots, n+1$. Therefore, we have obtained an epimorphism 
	$G^{\otimes n+1} \to \gamma_{n+1}(H)$, induced by $\lam_{n+1}^H$. 
\end{proof}

Let us now obtain one of the generalisations of Baer's Theorem.

\begin{proposition} \label{prop1} Let $1\to N \to H \to G \to 1$ be an extension of groups such that $N\leq Z_n(H)$ for a fixed 
	positive integer $n$. We have that,
	
	\begin{enumerate}
		\item[\rm (i)]   if $G$ is finite (or a $p$-group), then $\gamma_{n+1}(H)$ is finite (or a $p$-group);
		\item[\rm (ii)]  if $G$ is a perfect group, then  $\gamma_{n+1}(H)$ is a perfect group;
		\item[\rm (iii)] if $G$ is polycyclic, then $\gamma_{n+1}(H)$ is polycyclic.
	\end{enumerate}
\end{proposition}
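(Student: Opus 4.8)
The plan is to reduce everything to the surjection $G^{\otimes n+1}\twoheadrightarrow \gamma_{n+1}(H)$ furnished by Lemma~\ref{lemma1}, and then transport properties of $G$ up through the $n$-fold tensor construction. The key observation is that each of the three properties in question---being finite (or a $p$-group), being perfect, being polycyclic---is preserved under quotients. Hence it suffices to prove that if $G$ has one of these properties, then $G^{\otimes n+1}$ does too; the conclusion for $\gamma_{n+1}(H)$ follows immediately as the homomorphic image of $G^{\otimes n+1}$ under the map of Lemma~\ref{lemma1}.

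First I would establish the statement for $G^{\otimes n+1}$ by induction on $n$, using Theorem~\ref{thm2} as the engine. The base case is $G^{\otimes 2}=G\otimes G$, where Theorem~\ref{thm2}(i)--(iii) applies directly with both factors equal to $G$. For the inductive step, recall that by definition $G^{\otimes n+1}=(G^{\otimes n})\otimes G$, formed via the compatible mutual actions of $G^{\otimes n}$ and $G$ described earlier in this section. Assuming $G^{\otimes n}$ already has the relevant property (finite, $p$-group, perfect, or polycyclic), I would apply Theorem~\ref{thm2} once more to the pair $(G^{\otimes n},G)$: for \textrm{(i)} the product of two finite groups (resp.\ $p$-groups) is again such, so Theorem~\ref{thm2}(i) gives that $(G^{\otimes n})\otimes G$ is finite (resp.\ a $p$-group); for \textrm{(iii)}, polycyclic groups are closed under the relevant hypotheses, so Theorem~\ref{thm2}(iii) applies verbatim. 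This closes the induction for \textrm{(i)} and \textrm{(iii)}.

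The delicate case is \textrm{(ii)}, the perfect case, because Theorem~\ref{thm2}(ii) requires \emph{both} factors to be perfect, and while $G$ is assumed perfect, it is not a priori clear that $G^{\otimes n}$ is perfect when $n\geq 2$. I expect this to be the main obstacle. The way around it is to verify the base case and inductive step compatibly: when $G$ is perfect, Theorem~\ref{thm2}(ii) gives that $G^{\otimes 2}=G\otimes G$ is perfect, which supplies exactly the hypothesis needed to feed $G^{\otimes 2}$ back as a perfect first factor. By induction, if $G^{\otimes n}$ is perfect and $G$ is perfect, then Theorem~\ref{thm2}(ii) applied to the pair $(G^{\otimes n},G)$ shows $G^{\otimes n+1}$ is perfect. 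Thus perfectness propagates up the tower, and the induction goes through for \textrm{(ii)} as well.

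Finally, I would assemble the argument: for each property, the induction shows $G^{\otimes n+1}$ inherits it from $G$, and then the surjection $G^{\otimes n+1}\to\gamma_{n+1}(H)$ of Lemma~\ref{lemma1} transfers it to $\gamma_{n+1}(H)$, since all three properties pass to quotient groups. One subtlety worth checking explicitly is that the mutual actions of $G^{\otimes n}$ and $G$ used at each stage are genuinely the compatible actions required by Theorem~\ref{thm2}; this is precisely what the remark ``all actions are compatible'' preceding the definition of $\lam_n^G$ guarantees, so no extra verification is needed beyond citing it.
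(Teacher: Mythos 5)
Your proposal is correct and follows essentially the same route as the paper: reduce via the surjection of Lemma~\ref{lemma1} to showing that $G^{\otimes n+1}$ inherits the property, then induct up the tower $G^{\otimes 2}, \ldots, G^{\otimes n+1}$ using Theorem~\ref{thm2}. Your extra care in the perfect case (tracking that both factors stay perfect at each stage) is a detail the paper leaves implicit in its one-line induction, but it is the same argument.
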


\begin{proof}  By Lemma \ref{lemma1} it suffices to show that $G^{\otimes n+1} $ is a finite group
	(or a $p$-group, or a perfect group, or a polycyclic group). Assume that $G$ is a finite group (or a $p$-group, or a perfect group, or a polycyclic group). 
	Recall that all mutual actions involved into the construction of $G^{\otimes n+1}$ are compatible. 
	Therefore, by Theorem \ref{thm2} we inductively see that the groups 
	$G^{\otimes 2} , G^{\otimes 3}, \ldots, G^{\otimes n+1} $ are finite groups (or $p$-groups, or perfect groups, or polycyclic groups). 
\end{proof}

Now we need a technical result to prove the result in the finitely generated case, which was proved in~\cite{DLP} for~$n=1$.

Let $A$ and $B$ be two groups acting compatibly on each other. Since the derivative~$D_B(A)$ is closed under the action of $B$, 
we can form $D_B(D_B(A))$. By the same argument  one can form $D_B(D_B(D_B(A)))$ and so on. Let us set $D^0_B(A)=A$ 
and $D^k_B(A)=D_B(D^{k-1}_B(A))$ for $k\geq 1$.
Consider the non-abelian tensor product  $D^k_B(A)\otimes B$, for each $k\geq 0$. Since  $D^k_B(A)$ and $B$
are acting compatibly on each other, there is a well-defined action of $B$ on  $D^k_B(A)\otimes B$ given by
\[
^b(x\otimes c) = \; ^bx \otimes \; ^bc,
\]
for each $x\in D^k_B(A)$ and $b, c \in B$. Consequently, for each $k, l\geq 0$, we can consider the group $D_B^l (D^k_B(A)\otimes B)$.
\begin{lemma}\label{L:tech}
	We have that
	\[
	D_B^{l}(D_B^{k}(A) \otimes B) = \Img \Big\{ D_B^{k+l}(A) \otimes B  \to D_B^{k}(A) \otimes B \Big\},
	\]
	where $D_B^{k+l}(A) \otimes B  \to D_B^{k}(A) \otimes B$ is the homomorphism induced by the natural inclusion
	$D^{k+l}_B(A) \to D^{k}_B(A)$ for each $k, l \geq 1$.
\end{lemma}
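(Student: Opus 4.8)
The plan is to reduce the statement to its case $l=1$ and then run an easy induction on $l$. First I would observe that the assertion is really independent of $k$: writing $M=D_B^{k}(A)$, which is again a $B$-group acting compatibly with $B$ by restriction of the actions on $A$, we have $D_B^{k+l}(A)=D_B^{l}(M)$ and $D_B^{k}(A)=M$, so the claim becomes
\[
D_B^{l}(M\otimes B)=\Img\bigl\{D_B^{l}(M)\otimes B\to M\otimes B\bigr\},
\]
for every compatibly acting $B$-group $M$ and every $l\geq 1$, the map being induced by the inclusion $D_B^{l}(M)\hookrightarrow M$. It therefore suffices to prove this $k$-free reformulation.

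I would prove it by induction on $l$, taking the base case $l=1$ as the key lemma (treated below). For the inductive step I would use two soft facts. The first is functoriality of the derivative: if $\phi\colon P\twoheadrightarrow Q$ is a surjective $B$-equivariant homomorphism, then $D_B(Q)=\phi(D_B(P))$, which is immediate from $\phi(p\,{}^bp^{-1})=\phi(p)\,{}^b\phi(p)^{-1}$. The second is functoriality of $-\otimes B$ with respect to $B$-equivariant inclusions together with the fact that inclusion-induced maps compose. Granting the statement for $l$ and every $M$, set $N=D_B^{l}(M)$ and let $\pi\colon N\otimes B\twoheadrightarrow S:=\Img\{N\otimes B\to M\otimes B\}$ be the corestriction; it is a $B$-equivariant surjection and, by the inductive hypothesis, $S=D_B^{l}(M\otimes B)$. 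Hence
\[
D_B^{l+1}(M\otimes B)=D_B(S)=\pi\bigl(D_B(N\otimes B)\bigr),
\]
and applying the base case to $N$ gives $D_B(N\otimes B)=\Img\{D_B(N)\otimes B\to N\otimes B\}$. Pushing this forward along $\pi$ and composing the inclusion-induced maps $D_B(N)\hookrightarrow N\hookrightarrow M$ identifies $D_B^{l+1}(M\otimes B)$ with $\Img\{D_B^{l+1}(M)\otimes B\to M\otimes B\}$, completing the induction.

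It remains to prove the base case $D_B(M\otimes B)=I$, where $I:=\Img\{D_B(M)\otimes B\to M\otimes B\}$ is the subgroup generated by the $x\otimes c$ with $x\in D_B(M)$. Here I would work directly with the defining relations of the tensor product and the standard conjugation identities, under which $(m\otimes b)$ acts on $M\otimes B$ by conjugation through $\lam(m\otimes b)=m\,{}^bm^{-1}\in D_B(M)$ via the $M$-action. For $I\trianglelefteq M\otimes B$ and its $B$-stability, I would note that $D_B(M)$ is itself normal and $B$-stable in $M$, so conjugating a generator $x\otimes c$ by $m\otimes b$ yields ${}^{m\,{}^bm^{-1}}x\otimes{}^{m\,{}^bm^{-1}}c\in I$, while ${}^b(x\otimes c)={}^bx\otimes{}^bc\in I$. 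For the inclusion $D_B(M\otimes B)\subseteq I$ it then suffices, since $I$ is normal and $B$-stable, to check that $B$ acts trivially on the generators of $(M\otimes B)/I$; writing ${}^bm=({}^bm\,m^{-1})\,m$ with ${}^bm\,m^{-1}\in D_B(M)$ and absorbing the $B$-conjugation in the second slot, the relation $mm'\otimes c=({}^mm'\otimes{}^mc)(m\otimes c)$ rewrites ${}^bm\otimes{}^bc$ as $m\otimes c$ modulo $I$. Conversely, for $I\subseteq D_B(M\otimes B)$, I would show each generator $x\otimes c$ with $x=m\,{}^bm^{-1}$ a generator of $D_B(M)$ is a product of elements $w\,{}^{b'}w^{-1}$, again expanding $(m\,{}^bm^{-1})\otimes c$ by the first defining relation and rewriting the factors through the conjugation identity.

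The main obstacle is precisely this base case: in $M\otimes B$ two group multiplications and two mutually compatible actions are simultaneously in play, and the rewriting of ${}^bm\otimes{}^bc$ (respectively of $(m\,{}^bm^{-1})\otimes c$) through the defining relations must be organised so that every auxiliary factor produced either lies in $I$ or is visibly of the form $w\,{}^{b'}w^{-1}$. I expect the cleanest bookkeeping to come from identifying $I$ with the kernel of the induced surjection $M\otimes B\to(M/D_B(M))\otimes B$ and $D_B(M\otimes B)$ as the kernel of the projection onto the maximal $B$-trivial quotient, after which the two inclusions become the statement that these kernels coincide; the $n=1$ computations of \cite{DLP} should serve as a template.
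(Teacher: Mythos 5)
Your proposal is correct and takes essentially the same route as the paper: the paper also reduces to the case $l=1$ via an induction on $l$, and handles that base case with the identity $(a\otimes b)\,^c(a\otimes b)^{-1}=a\,^ba^{-1}\otimes c$ (cited from Brown--Johnson--Robertson), which is exactly the conjugation identity your base-case argument is organised around. The only difference is presentational: the paper outsources the $l=1$ bookkeeping entirely to that cited identity, whereas you re-derive it from the defining relations along the lines of the $n=1$ computation in the Donadze--Ladra--Paez-Guillan paper.
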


\begin{proof}
	By~\cite{BJR}, for each $a\in A$ and $b, c \in B$ we have the following identity:
	\[
	(a\otimes b)\:^c(a\otimes b)^{-1}  = a^ba^{-1} \otimes c,
	\]
	which implies that 
	\[
	D_B(D_B^{k}(A) \otimes B) = \Img \Big\{ D_B^{k+1}(A) \otimes B  \to D_B^{k}(A) \otimes B \Big\},
	\]
	where $D_B^{k+1}(A) \otimes B  \to D_B^{k}(A) \otimes B$ is the homomorphism induced by the natural inclusion
	$D^{k+1}_B(A) \to D^{k}_B(A)$. Therefore, by induction we get the desired result.
\end{proof}

%

\begin{proposition}\label{prop2} Let $G$ be a finitely generated group. Then the following are equivalent, for all $n \geq 0$:
	\begin{itemize} 
		\item[\rm(i)] $\gamma_{n+1}(G)$ is a finitely generated group;
		
		\item[\rm(ii)]$\gamma_{n+1}(H)$ is a finitely generated group for any extension of groups $1 \to N \to H \to G \to 1$ with $N\leq Z_n (H)$. 
	\end{itemize}
\end{proposition}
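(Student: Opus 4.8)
The plan is to treat the two implications separately; the forward one is immediate and the converse carries all the weight.

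For (ii) $\Rightarrow$ (i) I would apply the hypothesis to the trivial extension $1 \to 1 \to G \to G \to 1$ (with $H = G$ and the identity map): since $1 \leq Z_n(G)$, the choice $N = 1$ is admissible and forces $\gamma_{n+1}(G)$ to be finitely generated. For (i) $\Rightarrow$ (ii) the first reduction is to Lemma~\ref{lemma1}: any extension $1 \to N \to H \to G \to 1$ with $N \leq Z_n(H)$ admits a surjection $G^{\otimes n+1} \to \gamma_{n+1}(H)$, so it suffices to prove that $G^{\otimes n+1}$ is finitely generated. I would also record a preliminary reduction on $G$ itself: the quotient $G/\gamma_{n+1}(G)$ is finitely generated and nilpotent, hence polycyclic and therefore Noetherian, so every $\gamma_k(G)/\gamma_{n+1}(G)$ is finitely generated; combining this with the hypothesis that $\gamma_{n+1}(G)$ is finitely generated and the fact that an extension of finitely generated groups is finitely generated, I obtain that $\gamma_k(G)$ is finitely generated for every $1 \leq k \leq n+1$.

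The heart of the matter is to control the derivatives attached to the factorisation $G^{\otimes m} = G^{\otimes m-1} \otimes G$. I would first observe that $\lam_m^G \colon G^{\otimes m} \to \gamma_m(G)$ is equivariant for the diagonal conjugation action of $G$ on the source and the conjugation action on the target, whence $\lam_m^G(D_G^j(G^{\otimes m})) = D_G^j(\gamma_m(G)) = \gamma_{m+j}(G)$. Since $G^{\otimes m}$ acts on $G$ through $\lam_m^G$ followed by conjugation, this gives the key identity
\[
D_{D_G^j(G^{\otimes m})}(G) = [\,G, \gamma_{m+j}(G)\,] = \gamma_{m+j+1}(G).
\]
On the other side, Lemma~\ref{L:tech} (in the instance $k = 0$, immediate from the identity of~\cite{BJR} at the start of its proof) yields
\[
D_G^j(G^{\otimes m}) = \Img \big\{ D_G^j(G^{\otimes m-1}) \otimes G \to G^{\otimes m} \big\},
\]
while by definition $D_G\big(D_G^j(G^{\otimes m-1})\big) = D_G^{j+1}(G^{\otimes m-1})$.

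With these identities I would prove, by induction on $m \geq 1$, the statement that $D_G^j(G^{\otimes m})$ is finitely generated for every $j \geq 0$ with $m + j \leq n+1$. The base case $m = 1$ reads $D_G^j(G) = \gamma_{j+1}(G)$, finitely generated by the preliminary reduction. For the inductive step, fix $j$ with $m + j \leq n+1$; by the image description it suffices that $D_G^j(G^{\otimes m-1}) \otimes G$ be finitely generated, and I would obtain this from Theorem~\ref{thm3} applied to the pair $\big(D_G^j(G^{\otimes m-1}), G\big)$. Its hypotheses hold: the factor $D_G^j(G^{\otimes m-1})$ is finitely generated by induction, as $(m-1)+j \leq n+1$; the derivative $D_G^{j+1}(G^{\otimes m-1})$ is finitely generated by induction, as $(m-1)+(j+1) = m+j \leq n+1$; and the remaining derivative equals $\gamma_{m+j}(G)$ by the key identity with $m$ replaced by $m-1$, which is finitely generated since $m+j \leq n+1$. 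Taking $m = n+1$ and $j = 0$ shows that $G^{\otimes n+1}$ is finitely generated, and the reduction via Lemma~\ref{lemma1} completes the proof.

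The main obstacle, and the reason one must carry the derivatives $D_G^j$ for all $j$ at once, is the bookkeeping in the inductive step: Theorem~\ref{thm3} applied to $D_G^j(G^{\otimes m-1}) \otimes G$ unavoidably produces the higher derivative $D_G^{j+1}(G^{\otimes m-1})$, so an induction on $m$ with $j$ fixed does not close. The bound $m + j \leq n+1$ must be arranged precisely so that every derivative summoned along the way remains expressible, through the equivariance of $\lam_m^G$ and the image description of Lemma~\ref{L:tech}, by a term $\gamma_k(G)$ with $k \leq n+1$, which is exactly where finite generation of $G$ together with that of $\gamma_{n+1}(G)$ is genuinely used.
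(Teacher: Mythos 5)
Your proof is correct and follows essentially the same route as the paper's: reduce via Lemma~\ref{lemma1} to finite generation of $G^{\otimes n+1}$, then run an induction on iterated derivatives $D_G^{j}(G^{\otimes m})$ constrained by $m+j\leq n+1$, using Theorem~\ref{thm3}, Lemma~\ref{L:tech}, and the identity $D_{D_G^{j}(G^{\otimes m})}(G)=\gamma_{m+j+1}(G)$. The only differences are cosmetic: you fold the case $j=0$ into the induction and supply the (omitted in the paper) justification that $\gamma_k(G)$ is finitely generated for $k\leq n+1$ via polycyclicity of the finitely generated nilpotent quotient.
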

\begin{proof} The implication (ii) $\implies$ (i) is obvious, so let us show the converse.  
	By Lemma \ref{lemma1} it suffices to show that $G^{\otimes n+1}$ is finitely generated. 
	We will prove that~$G^{\otimes k}$ is finitely generated for each $1\leq k \leq n$, where, to ease notation, we assume
	that $G^{\otimes 1} =G$. By Theorem \ref{thm3} it is enough to show that the derivatives~$D_G(G^{\otimes k})$ and 
	$D_{G^{\otimes k}}(G)$ are finitely generated for each $1\leq k \leq n$. In addition, since $G$ and $\gamma_{n+1}(G)$ 
	are finitely generated, $\gamma_k(G)$ is also finitely generated for each $1\leq k \leq n+1$. 
	Moreover, we have that $D_{G^{\otimes k}}(G) = \gamma_{k+1}(G)$, so we only need to show that $D_G(G^{\otimes k})$ is finitely generated 
	for each $1\leq k \leq n$.
	
	Then, to see that 
	$D_G(G^{\otimes k})$ is finitely generated for each $1\leq k \leq n$ we will show that $D_G^{l}(G^{\otimes k})$ is finitely generated for each $k$ and $l$, such that $1\leq k, l \leq n$ and 
	$k+l \leq n+1$. 
	
	If $k=1$, then this is immediate. Assume that the same is true for a fixed integer $k$. Using Lemma~\ref{L:tech} we have that
	\[
	D_G^l (G^{\otimes k+1})=D_G^l (G^{\otimes k}\otimes G) = 
	\Img \Big\{  D_G^l (G^{\otimes k}) \otimes G \to G^{\otimes k} \otimes G \Big\}.
	\]
	If $k+l\leq n$, then by Theorem~\ref{thm3}, $D_G^l (G^{\otimes k}) \otimes G$ will be finitely generated, because~$D_G(D_G^l (G^{\otimes k}))= D_G^{l+1}(G^{\otimes k})$ and $D_{D^l (G^{\otimes k})}(G)=\gamma_{k+l+1}(G)$ are
	finitely generated. Thus, if $k+l\leq n$, we get that  $D_G^l (G^{\otimes k+1})$ is finitely generated as it is the homomorphic
	image of $D_G^l (G^{\otimes k}) \otimes G$. 
\end{proof}

\begin{corollary}
	If $G$ is a finitely generated perfect group, then $\gamma_{n+1}(H)$ is finitely generated for any extension of groups $1 \to N \to H \to G \to 1$ with $N\leq Z_n (H)$. 
\end{corollary}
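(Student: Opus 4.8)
The plan is to reduce the statement to Proposition~\ref{prop2} by verifying that its condition (i) holds in this situation. The only thing special about $G$ beyond being finitely generated is that it is perfect, so the first task is to understand what perfectness forces on the lower central series. I would start from the definition $\gamma_2(G) = [G, G]$ and recall that $G$ perfect means exactly $[G, G] = G$, i.e.\ $\gamma_2(G) = G$.

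Next I would argue by a short induction that the lower central series stabilises immediately: assuming $\gamma_k(G) = G$, we get
\[
\gamma_{k+1}(G) = [\gamma_k(G), G] = [G, G] = G,
\]
so that $\gamma_{n+1}(G) = G$ for every $n \geq 0$. Since $G$ is assumed to be finitely generated, this shows that $\gamma_{n+1}(G)$ is finitely generated, which is precisely hypothesis (i) of Proposition~\ref{prop2}.

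Finally, I would invoke the implication (i) $\implies$ (ii) of Proposition~\ref{prop2}, which then gives directly that $\gamma_{n+1}(H)$ is finitely generated for every extension $1 \to N \to H \to G \to 1$ with $N \leq Z_n(H)$, as desired. I do not expect any genuine obstacle here: all of the real content is carried by Proposition~\ref{prop2}, and the corollary is essentially the observation that a finitely generated perfect group automatically satisfies its first condition. The only point requiring any care is the elementary remark that perfectness collapses the entire lower central series onto $G$ itself.
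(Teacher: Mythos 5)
Your proof is correct and is exactly the intended argument: the paper leaves this corollary without an explicit proof precisely because, as you observe, perfectness gives $\gamma_{n+1}(G)=G$, so condition (i) of Proposition~\ref{prop2} holds trivially and (i) $\implies$ (ii) finishes the job.
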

Now we want to generalise Schur's Theorem, so we will introduce some notation. Let $G$ be a group, and let $G^{\wedge (1)} = G$. Then, we consider inductively
\[
G^{\wedge (n+1)}=G^{\wedge (n)}\wedge G^{\wedge (n)}, \quad n\geq 1. 
\]
We will call this notion the \emph{$n$-iterated exterior product}. For $n \geq 2$, there is a homomorphism $\mu_{n}^{G}\colon G^{\wedge (n)} \to G$, given by
$\mu_{n}^{G} =\mu^{G^{\wedge (1)}}\circ \cdots \circ \mu^{G^{\wedge (n-1)}}$, where 
$\mu^{G^{\wedge (i)} } \colon G^{\wedge (i)} \to G^{\wedge(i-1)}$ is the homomorphism introduced after Definition~\ref{D:exterior}.

For any $g_1, g_2, \ldots, g_{2^{k+1}} \in G$, we simplify the notation inductively as follows:
\[
g_1\wedge g_2\wedge\cdots \wedge g_{2^{k+1}}= (g_1\wedge g_2\wedge\cdots \wedge g_{2^{k}}) \wedge 
(g_{2^k+1}\wedge g_{2^k+2}\wedge \cdots \wedge g_{2^{k+1}}).  
\]

\begin{definition} For each $n\geq 1$, we define a group $\mathfrak{D}_n(G)$ by
	\[
	\mathfrak{D}_n(G)=\{ g\in G \mid [\cdots [[g, x_1], x_2], \cdots, x_{n}] =1 \;\text{for each} \; x_i\in \Gamma_i(G) \}.
	\]
	
	It is easy to observe that $\mu_{n+1}^G ( g_1\wedge g_2\wedge\cdots \wedge g_{2^{n+1}} )=1$ whenever
	$g_i\in \mathfrak{D}_n(G)$ for some $1\leq i \leq 2^{n+1}$. 
\end{definition}

\begin{lemma}\label{lemma2}  Let $1 \to N \to H \to G \to 1$ be an extension of groups such that ${N\leq \mathfrak{D}_n (H)}$ 
	for a fixed positive integer $n$. Then, there exists a surjective homomorphism $ G^{\wedge (n+1)} \to \Gamma_{n+1}(H)$ 
	such that the following diagram commutes: 
	\[
	\xymatrix{
		G^{\wedge (n+1)} \ar[r]^{\id}\ar[d] & G^{\wedge (n+1)} \ar[d]^{\mu_{n+1}^G}\\
		\Gamma_{n+1}(H) \ar[r]^{}&  G  } 
	\]
\end{lemma}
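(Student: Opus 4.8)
The plan is to mimic the structure of the proof of Lemma~\ref{lemma1}, replacing the tensor square by the exterior square and the lower central series by the derived series. The key is to establish an exterior-product analogue of Theorem~\ref{thm1} (the behaviour of $\otimes$ with short exact sequences) and then to push the homomorphism $\mu_{n+1}^H$ down along the quotient $H^{\wedge(n+1)} \to G^{\wedge(n+1)}$.

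First I would recall, or prove, the exterior-product version of Theorem~\ref{thm1}: for the extension $1 \to N \to H \to G \to 1$, the induced map $H\wedge H \to G\wedge G$ is surjective with kernel $\overline{N}$ generated by the elements $h\wedge x$ and $x\wedge h$ for $h\in H$, $x\in N$ (this is standard and follows from the defining relations of $\wedge$). Iterating this through the tower defining $G^{\wedge(n+1)}$ yields a surjection $H^{\wedge(n+1)} \twoheadrightarrow G^{\wedge(n+1)}$, whose kernel $K$ is generated by iterated exterior products in which at least one innermost entry lies in $N$. I would then observe that $\Img(\mu_{n+1}^H) = \Gamma_{n+1}(H)$, since $\mu^{H^{\wedge(i)}}$ sends a wedge to the corresponding commutator and so the composite $\mu_{n+1}^H$ realises exactly the $(n+1)$-th term of the derived series on generators.

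The crucial step is to show $\mu_{n+1}^H(K) = 1$, so that $\mu_{n+1}^H$ factors through $G^{\wedge(n+1)}$. Here is where the hypothesis $N \leq \mathfrak{D}_n(H)$ enters: by the observation recorded in the definition of $\mathfrak{D}_n(G)$, we have $\mu_{n+1}^H(g_1\wedge\cdots\wedge g_{2^{n+1}}) = 1$ whenever some $g_i \in \mathfrak{D}_n(H)$, in particular whenever some $g_i \in N$. Since the generators of $K$ are precisely of this form, $K$ maps to $1$ under $\mu_{n+1}^H$. Consequently $\mu_{n+1}^H$ induces a well-defined surjective homomorphism $G^{\wedge(n+1)} = H^{\wedge(n+1)}/K \to \Gamma_{n+1}(H)$, and by construction this map composed with the inclusion $\Gamma_{n+1}(H)\hookrightarrow G$ agrees with $\mu_{n+1}^G$, giving the commutativity of the stated diagram.

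The main obstacle I expect is making the bookkeeping of the kernel $K$ rigorous across the $n+1$ iterations of the exterior square: one must check that at each stage the kernel of $H^{\wedge(i)} \to G^{\wedge(i)}$ is generated by wedges with an entry coming from the previous kernel, and then verify that the observation on $\mu_{n+1}^H$ really does annihilate all such iterated generators. This amounts to an induction on $n$, tracking how an innermost entry in $N$ propagates through the nested wedge notation $g_1\wedge\cdots\wedge g_{2^{n+1}}$; the compatibility of the actions (noted in the excerpt) guarantees all the relevant tensor/exterior products are well defined, so the difficulty is combinatorial rather than conceptual. Once the kernel is correctly identified, the factorisation and commutativity are formal.
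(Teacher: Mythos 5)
Your proposal follows essentially the same route as the paper: the paper applies Theorem~\ref{thm1} (implicitly in its exterior-square form) iteratively to get the exact sequence $1 \to \prod_{i=1}^{2^{n+1}} N_i \to H^{\wedge(n+1)} \to G^{\wedge(n+1)} \to 1$ with each $N_i$ generated by wedges having the $i$-th entry in $N$, notes that $\Img(\mu_{n+1}^H) = \Gamma_{n+1}(H)$, and uses the observation that $N \leq \mathfrak{D}_n(H)$ forces $\mu_{n+1}^H$ to annihilate these generators, exactly as you do. The bookkeeping you flag as the main obstacle is passed over silently in the paper's proof, so your version is, if anything, slightly more careful about the same argument.
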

\begin{proof} By Theorem \ref{thm1}  we have an exact sequence of groups
	\[
	1 \to \overset{2^{n+1}}{\underset{i=1}{\prod}}N_i \to H^{\wedge (n+1)} \to G^{\wedge (n+1)} \to 1,
	\]
	where $N_i$ is the normal subgroup of $H^{\wedge (n+1)}$ generated by all elements of the form
	$h_1 \wedge h_2 \wedge \cdots \wedge h_{2^{n+1}}$ with $h_i\in N$ and $h_j\in H$ whenever $j\neq i$. 
	Moreover, we have the following commutative diagram:
	\[
	\xymatrix{
		H^{\wedge (n+1)} \ar[r]^{}\ar[d]_{\mu_{n+1}^H} & G^{\wedge (n+1)} \ar[d]^{\mu_{n+1}^G}\\
		H \ar[r]^{}&  G  } 
	\]
	For each $i\leq n+1$, we know that $\mu_{n+1}^{H} (N_i)=1$, then $\mu_{n+1}^{H} (H^{\wedge (n+1)})=\Gamma_{n+1}(H)$. Therefore, $\mu_{n+1}^{H} \colon H^{\wedge (n+1)} \to H$ induces an epimorphism $G^{\wedge (n+1)} \to \Gamma_{n+1}(H)$. 
\end{proof}

We now obtain another generalisation of Baer's Theorem.

\begin{proposition}\label{prop3} Let $1 \to N \to H \to G \to 1$ be an extension of groups such that $N\leq \mathfrak{D}_n (H)$ 
	for a fixed positive integer $n$.
	\begin{itemize}
		\item[\rm (i)]   if $G$ is finite (or a $p$-group), then $\Gamma_{n+1}(H)$ is finite (or a $p$-group);
		\item[\rm (ii)]  if $G$ is a perfect group, then  $\Gamma_{n+1}(H)$ is a perfect group;
		\item[\rm (iii)] if $G$ is polycyclic, then $\Gamma_{n+1}(H)$ is polycyclic.
	\end{itemize}
	
\end{proposition}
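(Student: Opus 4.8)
The plan is to imitate, almost verbatim, the proof of Proposition~\ref{prop1}, with the non-abelian tensor square replaced by the non-abelian exterior square. First I would invoke Lemma~\ref{lemma2}, which (using the hypothesis $N\leq \mathfrak{D}_n(H)$) supplies a surjective homomorphism $G^{\wedge (n+1)} \to \Gamma_{n+1}(H)$. Each of the four properties in the statement — being finite, a $p$-group, perfect, or polycyclic — is inherited by homomorphic images, so it suffices to prove that $G^{\wedge (n+1)}$ enjoys the respective property whenever $G$ does. At this stage the hypothesis on $N$ has been fully consumed, and the remaining task is purely about the iterated exterior product of $G$.

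The key observation is that, for any group $K$ acting on itself by conjugation, the exterior square $K\wedge K$ is a homomorphic image of the tensor square $K\otimes K$. Indeed, comparing Definition~\ref{D:exterior} with the definition of the tensor product, the defining relations of $K\wedge K$ are exactly those of $K\otimes K$ together with the extra relations $k\wedge k=1$; hence $k\otimes k'\mapsto k\wedge k'$ determines a surjection $K\otimes K \twoheadrightarrow K\wedge K$. Consequently, by Theorem~\ref{thm2}, if $K$ is finite (or a $p$-group, perfect, polycyclic), then $K\otimes K$ has the same property, and therefore so does its quotient $K\wedge K$.

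Finally I would run the induction on the iterated exterior product. Writing $G^{\wedge (1)}=G$ and $G^{\wedge (m+1)}=G^{\wedge (m)}\wedge G^{\wedge (m)}$, the previous paragraph shows that the property passes from $G^{\wedge (m)}$ to $G^{\wedge (m+1)}$; iterating from $m=1$ up to $m=n$ yields the claim for $G^{\wedge (n+1)}$. I do not anticipate any genuine obstacle here: the argument is formal once the surjection $K\otimes K\twoheadrightarrow K\wedge K$ is noted, the only mild point being to confirm that the conjugation actions at each stage are compatible so that Theorem~\ref{thm2} applies, which is automatic for the tensor square with conjugation action.
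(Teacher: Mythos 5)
Your argument is correct and is essentially the paper's own proof: reduce via Lemma~\ref{lemma2} to showing the property for $G^{\wedge (n+1)}$, observe that the exterior square is a quotient of the tensor square, and apply Theorem~\ref{thm2} inductively. The paper states this more tersely but the reasoning is identical.
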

\begin{proof} By Lemma~\ref{lemma2} it suffices to prove that $G^{\wedge (n+1)}$ is a finite group (or a $p$-group, 
	or a perfect group, or a polycyclic group). The exterior product is, by definition, a quotient group of the non-abelian tensor square.
	Therefore, by Theorem~\ref{thm2} we get that $G^{\wedge (n+1)} $ is a finite group (or a $p$-group, or a perfect group, or a polycyclic group). 
\end{proof}

Note that Proposition~\ref{prop3}(i) was already shown in~\cite{Tur}.

\begin{proposition}\label{prop4} Let $n$ be a positive integer. Then the following are equivalent:
	\begin{itemize}
		\item[\rm (i)] $G^{\wedge (n+1)}$ is a finite group (or a $p$-group, or a perfect group, or a polycyclic group);
		\item[\rm (ii)] $\Gamma_{n+1}(H)$ is a finite group (or a $p$-group, or a perfect group, or a polycyclic group) for any extension of groups
		$1 \to N \to H \to G \to 1$ such that $N\leq \mathfrak{D}_n (H)$. 
	\end{itemize}
\end{proposition}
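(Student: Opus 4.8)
The plan is to treat the two implications separately, the second being the substantial one. For (i) $\Rightarrow$ (ii), let $1 \to N \to H \to G \to 1$ be any extension with $N \le \mathfrak{D}_n(H)$. Lemma~\ref{lemma2} then supplies a surjective homomorphism $G^{\wedge (n+1)} \to \Gamma_{n+1}(H)$, and since each of the properties in play---being finite, a $p$-group, perfect, or polycyclic---passes to quotients, the hypothesis on $G^{\wedge (n+1)}$ forces the same property on its homomorphic image $\Gamma_{n+1}(H)$. This is precisely the argument of Proposition~\ref{prop3}, now read with the standing assumption imposed on $G^{\wedge (n+1)}$ rather than on $G$.

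For (ii) $\Rightarrow$ (i) the strategy is to exhibit a single extension $1 \to N \to H \to G \to 1$ with $N \le \mathfrak{D}_n(H)$ for which the map $G^{\wedge (n+1)} \to \Gamma_{n+1}(H)$ of Lemma~\ref{lemma2} is an isomorphism. Granting such an $H$, hypothesis (ii) yields the property for $\Gamma_{n+1}(H)$ and the isomorphism transports it back to $G^{\wedge (n+1)}$. A bare surjection does not suffice here: none of the four properties can be detected on a proper quotient---a finite, perfect, or polycyclic quotient says nothing about the group above it---so bijectivity is genuinely required.

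The construction is transparent in the base case $n = 1$, where $\mathfrak{D}_1(H) = Z(H)$. Choosing a free presentation $1 \to R \to F \to G \to 1$ and setting $H = F/[F, R]$, the equality $[R, F] = [F, R]$ makes the kernel $N = R/[F, R]$ central, so $N \le Z(H) = \mathfrak{D}_1(H)$, while the classical identification $G \wedge G \cong [F, F]/[F, R]$ gives $\Gamma_2(H) = H' \cong G^{\wedge (2)}$, with the Lemma~\ref{lemma2} map realising exactly this isomorphism. For general $n$ I would seek an analogous quotient of a free resolution of $G$, tuned to the $n$-th stage of the derived series, whose $(n+1)$-st derived subgroup recovers $G^{\wedge (n+1)}$.

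This general construction is where I expect the real difficulty. Realising $G^{\wedge (n+1)}$ as $\Gamma_{n+1}(H)$ is an iterated Hopf-type problem: at each level the exterior square $A \wedge A$ sits over the commutator subgroup $[A, A]$ with kernel $H_2(A)$, so naively iterating the $n = 1$ quotient computes iterated \emph{derived} subgroups and throws away the Schur multiplier that reappears at every stage. The remedy is to take a stem-type cover at each step of the derived series, selecting $N$ so that it both lands in $\mathfrak{D}_n(H)$---for which the observation that $\mu_{n+1}^G$ annihilates $\mathfrak{D}_n(G)$ is the governing mechanism---and renders the surjection of Lemma~\ref{lemma2} injective. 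Exhibiting one explicit quotient of $F$ that satisfies both demands simultaneously is the technical heart of the argument.
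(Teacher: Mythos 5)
Your implication (i) $\Rightarrow$ (ii) matches the paper's, and your base case $n=1$ of (ii) $\Rightarrow$ (i) is exactly right, including the identification $G^{\wedge(2)} \cong [F,F]/[F,R]$. But the general case, which you explicitly leave open as ``the technical heart'', is precisely what the proof must supply, so as it stands there is a genuine gap. Worse, the heuristic you give for why the construction should be hard points in the wrong direction. You worry that iterating the $n=1$ quotient ``computes iterated derived subgroups and throws away the Schur multiplier that reappears at every stage'', and you propose stem-type covers to fix this. In fact the naive iteration is the correct construction and loses nothing: set $\Gamma_1(R,F)=R$, $\Gamma_{k+1}(R,F)=[\Gamma_k(R,F),\Gamma_k(F)]$, and take $H = F/\Gamma_{n+1}(R,F)$. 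The reason the multiplier survives is Nielsen--Schreier: $\Gamma_n(F)$ is a subgroup of a free group, hence free, so once one knows inductively that $G^{\wedge(n)} \cong \Gamma_n(F)/\Gamma_n(R,F)$, the sequence $1 \to \Gamma_n(R,F) \to \Gamma_n(F) \to G^{\wedge(n)} \to 1$ is itself a \emph{free presentation} of $G^{\wedge(n)}$. Applying your own $n=1$ formula (Blyth--Fumagalli--Morigi) to \emph{this} presentation computes the full exterior square, multiplier included:
\[
G^{\wedge(n+1)} = G^{\wedge(n)} \wedge G^{\wedge(n)} \cong \frac{[\Gamma_n(F),\Gamma_n(F)]}{[\Gamma_n(R,F),\Gamma_n(F)]} = \frac{\Gamma_{n+1}(F)}{\Gamma_{n+1}(R,F)} = \Gamma_{n+1}(H).
\]
No stem covers are needed; the iteration is not computing derived subgroups of $G$ but exterior squares of the successive $G^{\wedge(k)}$, each presented freely.

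Two further points you would still need to check to close the argument. First, the kernel condition: you must verify that $R/\Gamma_{n+1}(R,F) \le \mathfrak{D}_n\bigl(F/\Gamma_{n+1}(R,F)\bigr)$ for general $n$, not just centrality in the case $n=1$; this follows from the definition of $\mathfrak{D}_n$ together with $[\cdots[[R,\Gamma_1(F)],\Gamma_2(F)],\cdots,\Gamma_n(F)] \le \Gamma_{n+1}(R,F)$, but it is not automatic and deserves a line. Second, your remark that ``a bare surjection does not suffice'' is correct for finiteness, $p$-groups, and polycyclicity, and the isomorphism above is indeed how the paper resolves it; so once the displayed isomorphism is established the rest of your outline goes through verbatim.
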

\begin{proof} The implication (i) $\implies$ (ii) follows directly from Lemma \ref{lemma2}. 
	Let us show the converse. Consider an extension of groups $1 \to R \to F \to G \to 1$ where $F$ is a free group. We denote 
	$\Gamma_1(R, F)=R$ and $\Gamma_{k+1}(R, F)=[\Gamma_k(R, F), \Gamma_k(F)]$, for $k\geq 1$. Then
	the following extension 
	\[
	1 \to \frac{R}{\Gamma_{n+1}(R, F)} \to  \frac{F}{\Gamma_{n+1}(R, F)} \to G \to 1
	\]
	is such that
	\[
	\frac {R}{\Gamma_{n+1}(R, F)} \leq \mathfrak{D}_n\Bigg ( \frac{F}{\Gamma_{n+1}(R, F)}\Bigg) . 
	\]
	Therefore, the group
	\[
	\Gamma_{n+1}\Bigg ( \frac{F}{\Gamma_{n+1}(R, F)}\Bigg) = \frac{\Gamma_{n+1}(F)}{\Gamma_{n+1}(R, F)}
	\]
	is in fact a finite group (or a $p$-group, or a perfect group, or a polycyclic group). Thus, we may complete the proof by showing that there is an isomorphism:
	\[
	G^{\wedge (n+1)} \cong \frac{\Gamma_{n+1}(F)}{\Gamma_{n+1}(R, F)}. 
	\]
	
	This is proven in~\cite[Theorem~2.1]{BlFuMo} for $n=1$. Let us assume that $G^{\wedge (n)} \cong \Gamma _{n}(F)/\Gamma_{n}(R, F)$ for $n\geq 2$. Since~$\Gamma _{n}(F)$ is a free group, 
	we have the following free presentation of $G^{(n)}$:
	\[
	1\to \Gamma_{n}(R, F) \to  \Gamma _{n}(F) \to G^{\wedge(n)} \to 1. 
	\]
	Therefore, using again~\cite[Theorem~2.1]{BlFuMo} we get
	\[
	G^{\wedge (n+1)}= G^{\wedge (n)}\wedge G^{\wedge (n)} \cong \frac{[ \Gamma _{n}(F),  \Gamma _{n}(F)]}{[ \Gamma_{n}(R, F), \Gamma _{n}(F)]} =  \frac{\Gamma _{n+1}(F)}{\Gamma_{n+1}(R, F)}.
	\]
\end{proof}

\begin{remark} \label{remark1} Let $1 \to R \to F \to G \to 1$ be an extension of groups such that $F$ is a free group.
	Lemma \ref{lemma2} tells us that there is an epimorphism 
	\[
	G^{\wedge (n+1)} \to \Gamma_{n+1}\bigg(\dfrac{F}{\Gamma_{n+1}(R, F)}\bigg).
	\]
	Following the proof of Proposition \ref{prop4} we can see that this map is in fact an isomorphism. 
\end{remark}

\

\section{Applications to non-abelian homological algebra}\label{S:Homological}

Let ${\bf Gr}$ be the category of groups, ${\bf Set}$ the category of sets and let $\mathcal U\colon {\bf Gr} \to {\bf Set}$ be the forgetful functor. The left adjoint functor of $\mathcal U$, is the functor $\mathcal F\colon {\bf Set} \to {\bf Gr}$ that assigns to a set $S$ the free group with basis $S$.
The pair of adjoint functors $(\mathcal{F}, \mathcal{U})$ induces a comonad $\mathcal P = (\mathcal P, \delta, \epsilon)$
on ${\bf Gr}$ in the usual way: $\mathcal P = \mathcal {FU} \colon {\bf Gr} \to {\bf Gr}$, $\epsilon \colon \mathcal P \to 1_{\bf Gr}$
is the counit and $\delta = \mathcal{F}\eta \mathcal{U}\colon \mathcal P \to \mathcal P ^2$ where 
$\eta \colon 1_{\bf Set} \to \mathcal{UF}$ is the unit of the adjunction. Given any group $G$ there is an augmented simplicial
group $\mathcal P _*(G) \to G$ where $\mathcal P _*(G) = \{\mathcal P _n(G), d^{i}_n, s^{i}_n\}$ is a simplicial group
defined as follows:
\[
\mathcal P _n (G) = \mathcal P ^{n+1} (G), \quad n\geq 0,
\]
\[
d^{i}_n =  \mathcal P ^i (\epsilon (\mathcal P ^{n-i}(G))), \quad s^{i}_n =  \mathcal P ^i (\delta (\mathcal P ^{n-i}(G))), \quad 0\leq i \leq n .
\]

Let $T\colon {\bf Gr} \to {\bf Gr}$ be a functor. As in~\cite{BB}, the left derived functors of $T$ with respect to the comonad 
$\mathcal P$ are given, for any group $G$, by
\[
\mathbf{L}_n T (G) = \pi _n (T(\mathcal P _*(G))), \; n\geq 0,
\]
where $T(\mathcal P _*(G))$ is the simplicial group obtained by applying the functor $T$ dimension-wise to $\mathcal P _*(G)$
and $\pi _n (T(\mathcal P _*(G)))$ is the $n$-th homotopy group of the simplicial group $T(\mathcal P _*(G))$. 
The functors $\mathbf{L}_n T $ may also be interpreted as non-abelian left derived functors of $T$, in the sense of
\cite{In}, relative to the projective class determined by the comonad $\mathcal P$. Furthermore, for any simplicial
group $F_*$ such that all the $F_n$ are free groups, $\pi_n (F_*)=1$ for $n\geq 1$ and $\pi_0 (F_*)=G$, we have:
\[
\mathbf{L}_n T (G) = \pi _n (T(F _*(G))), \quad n\geq 0.
\]
The simplicial group $F_*$ satisfying the aforementioned properties is called a free simplicial resolution of $G$. 

\begin{theorem}[\cite{BE, EGV}]\label{thm5}
	Let $1 \to R \to  F \to G \to 1$ be an extension of groups where $F$ is a free group.
	\begin{itemize}
		
		\item[\rm(i)] Let $T_k\colon {\bf Gr} \to {\bf Gr}$ be the functor given by $T_k (G)=G/\gamma_{k+1}(G)$, for $k\geq 1$, then
		\[
		\mathbf{L}_1 T_k (G)= \frac{R\cap \gamma_{k+1}(F)}{\gamma_{k+1}(R, F)},
		\]
		where $\gamma_1(R, F)=R$ and $\gamma_{n+1}(R, F)=[\gamma_n(R, F), F] = D_F(\gamma_{n}(R, F))$, for $n\geq 1$.
		
		\item[\rm(ii)] Let $T_{(k)} \colon {\bf Gr} \to {\bf Gr}$ be the functor given by $T_{(k)} (G)=G/\Gamma_{k+1}(G)$, for $k\geq 1$, then
		\[
		\mathbf{L}_1 T_{(k)} (G)= \frac{R\cap \Gamma_{k+1}(F)}{\Gamma_{k+1}(R, F)},
		\]
		where $\Gamma_1(R, F)=R$ and $\Gamma_{n+1}(R, F)=[\Gamma_n(R, F), \Gamma_n(F)]$, for $n\geq 1$.
	\end{itemize}
\end{theorem}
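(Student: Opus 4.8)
The plan is to compute $\mathbf{L}_1$ directly from its definition as the first homotopy group of the simplicial group obtained by applying the functor dimensionwise to a free simplicial resolution of $G$, and then to identify that homotopy group with the stated Baer-invariant quotient. The two items are formally identical: item (ii) is obtained from item (i) by replacing the verbal subgroup $\gamma_{k+1}$ and its relative version $\gamma_{k+1}(R,F)$ with $\Gamma_{k+1}$ and $\Gamma_{k+1}(R,F)$ throughout, so I would carry out (i) in detail and then transcribe the argument verbatim for (ii). As a guiding sanity check, for $k=1$ the functor $T_1$ is abelianisation and the formula reduces to $\mathbf{L}_1 T_1(G)=(R\cap[F,F])/[R,F]$, which is the classical Hopf formula for $H_2(G)$; this fixes the conventions and indicates what the general computation must produce.

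First I would fix the given free presentation $1\to R\to F\xrightarrow{p} G\to 1$ and extend it, by a step-by-step construction, to the low-dimensional part of a free simplicial resolution $F_\ast\to G$ with $F_0=F$, using that $\mathbf{L}_1 T_k(G)$ is independent of the chosen free simplicial resolution. I would then apply $T_k$ dimensionwise; since each $F_n$ is free, $T_k(F_n)=F_n/\gamma_{k+1}(F_n)$ is free nilpotent of class $k$ on the same basis. To extract $\pi_1$ I would pass to the Moore complex, writing $N_nX=\bigcap_{i>0}\Ker(d_i)$ with boundary the restriction of $d_0$, so that $\pi_1(T_k(F_\ast))=\Ker\{\,N_1\to N_0\,\}/\Img\{\,N_2\to N_1\,\}$. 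The numerator is governed by $F_0=F$: the cycles are represented by elements of $R$ whose image lands in $\gamma_{k+1}(F)$, so that modulo $\gamma_{k+1}(F)$ they contribute exactly $R\cap\gamma_{k+1}(F)$. The denominator is the image of the second Moore term, and unwinding the two nondegenerate faces shows that the boundaries are precisely the iterated commutators of elements of $R$ with elements of $F$, that is, the relative lower central term $\gamma_{k+1}(R,F)=D_F^{k}(R)$ appearing in the statement.

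The hard part will be the exact identification in the previous step: proving that the boundary subgroup equals $\gamma_{k+1}(R,F)$ on the nose, neither larger nor smaller, and that the resulting quotient is independent of all choices made. This is exactly the content of the theory of Baer invariants, and the cleanest conceptual route I would take to secure it is through the higher Hopf formulae of non-abelian (semi-abelian) homological algebra: the functors $T_k$ and $T_{(k)}$ are the reflections onto the Birkhoff subcategories of groups that are nilpotent of class $\leq k$, respectively solvable of derived length $\leq k$, and for such a reflection the comonadic first derived functor is the associated first homology object, expressed by the general Hopf formula as $(R\cap V(F))/[R,F]_V$, where $V$ is the verbal subgroup defining the subcategory and $[R,F]_V$ the corresponding relative commutator. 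Specialising $V=\gamma_{k+1}$ gives $[R,F]_V=\gamma_{k+1}(R,F)$ and $V=\Gamma_{k+1}$ gives $[R,F]_V=\Gamma_{k+1}(R,F)$, which are precisely the two formulae claimed; the one remaining verification is then the concrete matching of the abstract relative commutator with the explicit subgroups $\gamma_{k+1}(R,F)$ and $\Gamma_{k+1}(R,F)$.
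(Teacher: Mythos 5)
The paper does not prove this statement: Theorem~\ref{thm5} is quoted from the literature, with the nilpotent case (i) attributed to Burns--Ellis \cite{BE} and the solvable case (ii) to Everaert--Gran--Van der Linden \cite{EGV}. So there is no internal proof to compare against; what you have written is an outline of essentially the same route those references take (comonadic derived functors computed as $\pi_1$ of the Moore complex of a free simplicial resolution, reduced to a Hopf-type formula $(R\cap V(F))/[R,F]_V$ for the Baer invariant of a Birkhoff subvariety). As an outline it is sound, and your $k=1$ sanity check against the classical Hopf formula is the right calibration.

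Two cautions, though, if this were to stand as an actual proof rather than a pointer to the literature. First, the step you label as ``the hard part'' --- showing that $\Img\{N_2\to N_1\}$ is exactly the relative verbal subgroup, no larger and no smaller, and that the quotient is presentation-independent --- is the entire content of the cited theorems; deferring it to ``the theory of Baer invariants'' and ``the higher Hopf formulae'' makes the argument circular as a self-contained proof, since those are precisely the results being proved. Second, and more substantively, your claim that item (ii) is obtained from item (i) by transcribing the argument verbatim is not right. The two relative subgroups have different shapes: $\gamma_{n+1}(R,F)=[\gamma_n(R,F),F]$ brackets against all of $F$ at each stage, whereas $\Gamma_{n+1}(R,F)=[\Gamma_n(R,F),\Gamma_n(F)]$ brackets against the shrinking subgroups $\Gamma_n(F)$. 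The identification of the abstract relative commutator $[R,F]_{\mathcal V}$ with the classical verbal construction is routine for the nilpotent varieties but was genuinely open for the solvable ones; establishing the formula $[R,F]_{\mathcal{S}ol_k}=\Gamma_{k+1}(R,F)$ is the main point of \cite{EGV} and required categorical Galois theory rather than a substitution into the nilpotent computation. So the symmetry you invoke between (i) and (ii) is exactly where the difficulty is concentrated, and your plan as written would not detect that.
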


This theorem shows that $\mathbf{L}_1 T_1 (G)$ is, according to Hopf's formula isomorphic to the Schur multiplier of $G$, and $\mathbf{L}_1 T_k (G)$ is the Baer
invariant called the $k$-nilpotent multiplier of $G$, for $k\geq 2$. 

\begin{proposition}\label{prop5} Let $T_k$ and $T_{(k)}$ be the functors defined in the previous theorem, and recall $\lam_{k}^G$ and $\mu_{k}^G$ from Section~\ref{S:Baer}.
	\begin{itemize} 
		\item[\rm(i)]There is an epimorphism of groups 
		\[
		\Ker \big\{\lam_{k+1}^G \colon G^{\otimes k+1}\to  G \big\} \to \mathbf{L}_1 T_k (G).
		\]
		
		\item[\rm(ii)] There is an isomorphism of groups 
		\[
		\Ker \big\{\mu_{k+1}^G \colon G^{\wedge (k+1)}\to  G \big\} = \mathbf{L}_1 T_{(k)} (G).
		\]
	\end{itemize} 
\end{proposition}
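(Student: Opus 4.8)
The plan is to use a free presentation $1 \to R \to F \to G \to 1$ of $G$ and to apply Lemma~\ref{lemma1} and Lemma~\ref{lemma2} to suitable nilpotent/derived quotients of $F$, identifying the kernels of $\lam^G_{k+1}$ and $\mu^G_{k+1}$ with the relative terms computed in Theorem~\ref{thm5}. In both parts the mechanism is the same: a commutative square produced by the relevant lemma exhibits $\lam^G_{k+1}$ (resp.\ $\mu^G_{k+1}$) as a composite, and one reads off the kernel. The decisive difference between (i) and (ii) is whether the vertical map supplied by the lemma is merely surjective or is an isomorphism.

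For part (i), I would set $H = F/\gamma_{k+1}(R,F)$ and form the extension $1 \to N \to H \to G \to 1$ with $N = R/\gamma_{k+1}(R,F)$. The first step is to verify $N \leq Z_k(H)$, which follows from the commutator computation $[N,\underbrace{H,\ldots,H}_{j}] = \gamma_{j+1}(R,F)/\gamma_{k+1}(R,F)$ for $1 \leq j \leq k$ (using that each $\gamma_j(R,F)$ is normal in $F$ and $\gamma_{k+1}(R,F) = [\gamma_k(R,F),F]$), which is trivial at $j=k$. Lemma~\ref{lemma1} then yields a surjection $\phi \colon G^{\otimes k+1} \to \gamma_{k+1}(H)$ fitting into a square whose bottom edge is the map $\psi \colon \gamma_{k+1}(H) \to G$ induced by $H \to G$, and whose commutativity gives $\lam^G_{k+1} = \psi \circ \phi$. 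Since $\gamma_{k+1}(R,F) \leq \gamma_{k+1}(F) \cap R$, one identifies $\gamma_{k+1}(H) = \gamma_{k+1}(F)/\gamma_{k+1}(R,F)$ and $\Ker \psi = (R \cap \gamma_{k+1}(F))/\gamma_{k+1}(R,F) = \mathbf{L}_1 T_k(G)$ by Theorem~\ref{thm5}(i). Restricting $\phi$ to $\Ker(\lam^G_{k+1}) = \phi^{-1}(\Ker \psi)$ produces the desired epimorphism onto $\mathbf{L}_1 T_k(G)$; surjectivity is immediate, since any $\phi$-preimage of an element of $\Ker \psi$ automatically lies in $\Ker(\lam^G_{k+1})$.

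For part (ii), I would run the parallel argument with $H = F/\Gamma_{k+1}(R,F)$ and $N = R/\Gamma_{k+1}(R,F)$, invoking Lemma~\ref{lemma2} instead of Lemma~\ref{lemma1}; the inclusion $N \leq \mathfrak{D}_k(H)$ is exactly the one already established in the proof of Proposition~\ref{prop4}. The crucial upgrade is Remark~\ref{remark1}: the induced map $G^{\wedge(k+1)} \to \Gamma_{k+1}(H) = \Gamma_{k+1}(F)/\Gamma_{k+1}(R,F)$ is not merely surjective but an \emph{isomorphism}. Writing $\mu^G_{k+1}$ as the composite of this isomorphism with the map $\Gamma_{k+1}(F)/\Gamma_{k+1}(R,F) \to G$ coming from the Lemma~\ref{lemma2} square, and passing to kernels, gives an honest isomorphism $\Ker(\mu^G_{k+1}) \cong (R \cap \Gamma_{k+1}(F))/\Gamma_{k+1}(R,F) = \mathbf{L}_1 T_{(k)}(G)$ via Theorem~\ref{thm5}(ii).

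The main obstacle, and the reason (i) yields only an epimorphism while (ii) yields an isomorphism, is the absence of a tensor-product analogue of Remark~\ref{remark1}. Lemma~\ref{lemma1} provides only a surjection $G^{\otimes k+1} \to \gamma_{k+1}(H)$, whose kernel obstructs injectivity at the level of kernels; there is no Hopf-type formula identifying $G^{\otimes k+1}$ with $\gamma_{k+1}(F)/\gamma_{k+1}(R,F)$, in contrast to the exterior case, where the inductive argument of Proposition~\ref{prop4} together with Remark~\ref{remark1} supplies precisely such an isomorphism. Accordingly, I expect the genuinely delicate work to be the two structural verifications $N \leq Z_k(H)$ and $N \leq \mathfrak{D}_k(H)$, together with pinning down the bottom maps of the two squares accurately enough to match their kernels with the relative terms of Theorem~\ref{thm5}.
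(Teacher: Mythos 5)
Your proposal is correct, and part (ii) follows essentially the same route as the paper: apply Lemma~\ref{lemma2} to the extension $1 \to R/\Gamma_{k+1}(R,F) \to F/\Gamma_{k+1}(R,F) \to G \to 1$, upgrade the surjection to an isomorphism via Remark~\ref{remark1}, and match kernels with Theorem~\ref{thm5}(ii). Where you genuinely diverge is part (i): the paper does not argue it at all, but simply cites the exact sequence of Burns and Ellis (an adaptation of Lue's result on the Ganea map), whereas you give a direct, self-contained proof by running the exterior argument's tensor analogue --- taking $H=F/\gamma_{k+1}(R,F)$, checking $N=R/\gamma_{k+1}(R,F)\leq Z_k(H)$ via $[N,\underbrace{H,\ldots,H}_{j}]=\gamma_{j+1}(R,F)/\gamma_{k+1}(R,F)$, and restricting the Lemma~\ref{lemma1} surjection $\phi\colon G^{\otimes k+1}\to\gamma_{k+1}(H)$ to $\Ker(\lam^G_{k+1})=\phi^{-1}(\Ker\psi)$. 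This argument is sound (the standard characterisation $N\leq Z_k(H)\iff[N,H,\ldots,H]=1$ with $k$ copies of $H$, and the identifications $\gamma_{k+1}(H)=\gamma_{k+1}(F)/\gamma_{k+1}(R,F)$ and $\Ker\psi=(R\cap\gamma_{k+1}(F))/\gamma_{k+1}(R,F)$, are all correct), and your diagnosis of why (i) only yields an epimorphism --- the absence of a tensor analogue of Remark~\ref{remark1} identifying $G^{\otimes k+1}$ with $\gamma_{k+1}(F)/\gamma_{k+1}(R,F)$ --- is exactly the right structural explanation. What your approach buys is independence from the external references \cite{BuEl} and \cite{Lue} and a visibly parallel treatment of (i) and (ii); what the paper's citation buys is brevity and a pointer to the stronger exact sequence from which the epimorphism is only one piece.
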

\begin{proof} 
	The first part is a particular case of the exact sequence obtained in~\cite{BuEl}, as an adaptation of the main result of~\cite{Lue}.
	
	To prove the second part, we need to consider the following
	extension of groups:
	\[
	1 \to \frac{R}{\Gamma_{k+1}(R, F)} \to \frac{F }{\Gamma_{k+1}(R, F)} \to G \to 1 ,
	\]
	where $\Gamma_{k+1}(R, F)$ is defined as in Theorem \ref{thm5}. This extension satisfies the requirements of 
	Lemma \ref{lemma2} for $n=k$. Therefore, we have the following commutative diagram
	\[
	\xymatrix{
		G^{\wedge (k+1)} \ar[r]^{ \mu_{k+1}^G }\ar[d] & G \ar[d]^{1_G}\\
		\Gamma_{k+1}\Big (\frac {F}{\Gamma_{k+1}(R, F)}  \Big ) \ar[r]^{}&  G  } 
	\]
	In Remark~\ref{remark1} we had pointed out that the left vertical arrow in this diagram is an isomorphism. Hence,
	\[
	\Ker \mu_{k+1}^G = \Ker \Big \{ \Gamma_{k+1}\Bigg (\frac {F}{\Gamma_{k+1}(R, F)}  \Bigg ) \to  G \Big\} = 
	\Ker \Bigg \{ \frac {\Gamma_{k+1}(F)}{\Gamma_{k+1}(R, F)}  \to  G \Bigg\} =
	\]
	\[
	\frac {R\cap \Gamma_{k+1}(F)}{\Gamma_{k+1}(R, F)} \overset{\text{(by Theorem \ref{thm5}})}{=} 
	\mathbf{L}_1 T_{(k)} (G).
	\]
\end{proof}

\begin{corollary}
	If $G$ is a finite group (resp.\ a $p$-group), then $\mathbf{L}_1 T_{k} (G)$ and $\mathbf{L}_1 T_{(k)} (G)$ are
	finite groups (resp.\ $p$-group) for all $k\geq 1$. 
\end{corollary}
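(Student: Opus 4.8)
The plan is to deduce the whole statement from Proposition~\ref{prop5}, which already expresses both derived functors as kernels of the maps $\lam_{k+1}^G$ and $\mu_{k+1}^G$. These kernels are subgroups of $G^{\otimes k+1}$ and $G^{\wedge(k+1)}$ respectively, and since finiteness and the property of being a $p$-group are inherited by subgroups and by quotients, it suffices to know that the ambient groups $G^{\otimes k+1}$ and $G^{\wedge(k+1)}$ are finite (resp.\ $p$-groups) whenever $G$ is. Both of these facts were in effect already established inside the proofs of Proposition~\ref{prop1} and Proposition~\ref{prop3}, so no genuinely new computation is required.

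For the invariant $\mathbf{L}_1 T_{(k)}(G)$ I would argue directly. By Proposition~\ref{prop5}(ii) there is a group isomorphism $\mathbf{L}_1 T_{(k)}(G) \cong \Ker\{\mu_{k+1}^G\colon G^{\wedge(k+1)} \to G\}$, so $\mathbf{L}_1 T_{(k)}(G)$ is isomorphic to a subgroup of $G^{\wedge(k+1)}$. As observed in the proof of Proposition~\ref{prop3}, the iterated exterior product $G^{\wedge(k+1)}$ is a quotient of an iterated non-abelian tensor square, hence by Theorem~\ref{thm2}(i) it is finite (resp.\ a $p$-group) when $G$ is. A subgroup of a finite group is finite and a subgroup of a $p$-group is a $p$-group, so $\mathbf{L}_1 T_{(k)}(G)$ has the required property.

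For $\mathbf{L}_1 T_{k}(G)$ one additional step is needed, because Proposition~\ref{prop5}(i) supplies only an epimorphism $\Ker\{\lam_{k+1}^G\colon G^{\otimes k+1} \to G\} \twoheadrightarrow \mathbf{L}_1 T_{k}(G)$ rather than an isomorphism. Here I would first recall, exactly as in the proof of Proposition~\ref{prop1}, that compatibility of all the actions together with Theorem~\ref{thm2}(i) shows inductively that $G^{\otimes 2}, \ldots, G^{\otimes k+1}$ are all finite (resp.\ $p$-groups). Consequently $\Ker\{\lam_{k+1}^G\}$, being a subgroup of $G^{\otimes k+1}$, is finite (resp.\ a $p$-group), and its epimorphic image $\mathbf{L}_1 T_{k}(G)$ inherits the same property. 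The case $k=1$ is covered identically and recovers the classical fact that the Schur multiplier $H_2(G)$ is finite (resp.\ a $p$-group).

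I do not expect a serious obstacle: the content is carried entirely by Proposition~\ref{prop5} and Theorem~\ref{thm2}(i). The only point demanding a little care is to respect the direction of the maps in Proposition~\ref{prop5} — the $T_{(k)}$ invariant appears as a \emph{subgroup} of $G^{\wedge(k+1)}$, whereas the $T_{k}$ invariant appears only as a \emph{quotient} of a subgroup of $G^{\otimes k+1}$ — and to use that both subgroups and quotients preserve finiteness and the $p$-group property.
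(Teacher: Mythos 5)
Your proof is correct and follows exactly the route the paper intends: the corollary is stated without proof precisely because it is immediate from Proposition~\ref{prop5} together with the finiteness (resp.\ $p$-group) closure of the iterated tensor and exterior products established via Theorem~\ref{thm2}(i) in the proofs of Propositions~\ref{prop1} and~\ref{prop3}. Your care in distinguishing the isomorphism in part (ii) from the mere epimorphism in part (i) is exactly the right point to attend to, and both cases go through as you describe.
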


Let us recall that a group $G$ is said to be a Schur group if $G$ is perfect and the Schur multiplier of $G$
is trivial. 

\begin{theorem} 
	Let $G$ be a Schur group. Then, $\mathbf{L}_1 T_{k} (G)$ and $\mathbf{L}_1 T_{(k)} (G)$ are trivial for all $k\geq 1$. 
\end{theorem}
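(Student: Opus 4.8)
The plan is to deduce both vanishing statements from Proposition~\ref{prop5} by showing that the structural homomorphisms $\mu_{k+1}^G$ and $\lam_{k+1}^G$ are isomorphisms (in particular have trivial kernel) whenever $G$ is a Schur group. Recall that $G$ being a Schur group means $G$ is perfect, so $[G,G]=G$, and $H_2(G)=1$. By Theorem~\ref{thm4} we have $H_2(G)\cong \Ker\mu^G$, so $\mu^G\colon G\wedge G\to G$ is injective; since its image is $[G,G]=G$, it is in fact an isomorphism. In particular $G\wedge G\cong G$ is again a Schur group.

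I would treat the exterior case first, as it is the cleaner one and settles $\mathbf{L}_1 T_{(k)}(G)$. Arguing by induction on $n$, one checks that each iterated exterior power $G^{\wedge(n)}$ is a Schur group isomorphic to $G$, and that the map $\mu^{G^{\wedge(n)}}\colon G^{\wedge(n+1)}\to G^{\wedge(n)}$ is an isomorphism by applying the observation above to the Schur group $G^{\wedge(n)}$. Since $\mu_{k+1}^G$ is by definition the composite of these maps, it is an isomorphism, so $\Ker\mu_{k+1}^G=1$. By Proposition~\ref{prop5}(ii) this gives $\mathbf{L}_1 T_{(k)}(G)=1$.

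For the tensor case, Proposition~\ref{prop5}(i) reduces us to proving $\Ker\lam_{k+1}^G=1$. The base step is a comparison between the tensor and exterior squares: for a perfect group $G$ the natural epimorphism $G\otimes G\to G\wedge G$ (which kills the elements $g\otimes g$) is an isomorphism, since by the structural results of~\cite{BJR} its kernel is isomorphic to the value at $G^{\mathrm{ab}}$ of Whitehead's universal quadratic functor, and this is trivial because $G^{\mathrm{ab}}=1$. Composing with $\mu^G$ then shows $\lam_2^G\colon G\otimes G\to G$ is an isomorphism. I would then run an induction on $n$ establishing that, via $\lam_n^G$, the pair $(G^{\otimes n},G)$ together with its mutual actions is isomorphic to $(G,G)$ with the conjugation actions: the action of $G^{\otimes n}$ on $G$ factors through $\lam_n^G$ by construction, and $\lam_n^G$ intertwines the diagonal $G$-action on $G^{\otimes n}$ with conjugation on $G$. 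Functoriality of the non-abelian tensor product under this equivariant isomorphism yields $G^{\otimes n+1}=G^{\otimes n}\otimes G\cong G\otimes G$, under which $\lam_{n+1}^G$ corresponds to $\lam_2^G$ and is therefore an isomorphism. Hence $\Ker\lam_{k+1}^G=1$, and Proposition~\ref{prop5}(i) gives $\mathbf{L}_1 T_k(G)=1$.

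The main obstacle lies in the tensor case. Unlike the exterior tower, where the defining factorisation of $\mu_{k+1}^G$ makes the induction immediate once $\mu^G$ is known to be an isomorphism, the maps $\lam_n^G$ are not literally a composition of copies of a single map; I must therefore carefully track how the mutual actions transport in order to invoke functoriality of $\otimes$, and I must separately justify the $\otimes$-versus-$\wedge$ comparison used in the base step. Once these two points are in place, both kernels are trivial and the theorem follows from Proposition~\ref{prop5}.
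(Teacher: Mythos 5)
Your proposal is correct and follows essentially the same route as the paper: reduce via Proposition~\ref{prop5} to showing $\mu_{k+1}^G$ and $\lam_{k+1}^G$ are isomorphisms, iterate Theorem~\ref{thm4} for the exterior tower, and for the tensor tower use that $G\otimes G\to G\wedge G$ is an isomorphism for perfect $G$ together with an equivariant induction (your factorisation $\lam_{n+1}^G=\lam_2^G\circ(\lam_n^G\otimes\id)$ is just a repackaging of the paper's maps $\al_n^G$). The only difference is that you spell out the Whitehead-functor justification for the $\otimes$-versus-$\wedge$ comparison, which the paper simply asserts.
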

\begin{proof} It is well-known that the Schur multiplier of $G$ is isomorphic to the second homology group $H_2(G)$.
	By Theorem \ref{thm4} we get that $\mu^G \colon G\wedge G \to G$ is an isomorphism. This implies that 
	$\mu^{G^{\wedge (2)}}\colon G^{\wedge (2)}\wedge G ^{\wedge (2)}\to G^{\wedge (2)}$ is also an isomorphism, and by induction, we see that 
	$\mu^{G^{\wedge (n)}} \colon G^{\wedge (n)}\wedge G ^{\wedge (n)}\to G^{\wedge (n)}$ is an isomorphism for 
	all $n\geq 1$. Hence, $\mu_{k+1}^{G} =\mu^{G^{\wedge (1)}}\circ \cdots \circ \mu^{G^{\wedge (k)}}$
	is an isomorphism for $k\geq 1$. By Proposition \ref{prop5} we get $\mathbf{L}_1 T_{(k)} (G)=1$ for $k\geq 1$. 
	
	We will show that $\lam _{k+1}^G \colon G^{\otimes k+1} \to G$ is an isomorphism for $k\geq 1$. Since $G$ is a perfect group,
	the natural projection $G\otimes G \to G\wedge G$ is an isomorphism. This implies that $\lam_2^G \colon G\otimes G \to G$
	is an isomorphism of Schur groups. Therefore, it is easy to see that the map
	$\al_ n^G \colon G^{\otimes n+1} \to G^{\otimes n}$ given by 
	\[
	(\cdots ((g_1\otimes g_2)\otimes g_3) \otimes \cdots \otimes g_{n+1}) \mapsto 
	(\cdots ([g_1, g_2]\otimes g_3) \otimes \cdots \otimes g_{n+1}),
	\]
	extends to an isomorphism for all $n\geq 1$. Since $\lam_{k+1}^G =  \al_ {1}^G\circ \cdots \circ \al_ {k}^G$,
	by Proposition \ref{prop5} we get $\mathbf{L}_1 T_{k} (G)=1$ for $k\geq 1$.
\end{proof}

Let $G$ be a finitely generated group and $k$ be a fixed positive integer. Then the Baer invariant $\mathbf{L}_1 T_{k} (G)$ 
does not need to be finitely generated even when $\gamma_{n}(G)$ is finitely generated for all $n\geq 1$. But we can now prove 
the following:

\begin{theorem} Let $G$ be a finitely generated group and $k$ be a fixed positive integer such that 
	$\mathbf{L}_1 T_{k} (G)$ is finitely generated. Then, $\mathbf{L}_1 T_{n} (G)$ is finitely generated for all 
	$1\leq n\leq k$.
\end{theorem}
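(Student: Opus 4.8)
The plan is to prove the statement by a downward induction on $n$, reducing everything to a single \emph{descent step}: if $\mathbf{L}_1 T_{n+1}(G)$ is finitely generated then so is $\mathbf{L}_1 T_{n}(G)$. Starting from the hypothesis that $\mathbf{L}_1 T_{k}(G)$ is finitely generated and iterating this step from $n+1=k$ down to $n+1=2$ then yields the claim for every $1\leq n\leq k$. To carry out the descent step I would work entirely with the explicit formula of Theorem~\ref{thm5}(i). Since $G$ is finitely generated, I first fix a \emph{finitely generated} free presentation $1\to R\to F\to G\to 1$; this is the only place where finite generation of $G$ is used, and it guarantees that the lower central quotients $\gamma_{m+1}(F)/\gamma_{m+2}(F)$ of the free group $F$ are finitely generated (free) abelian groups. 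Writing $A_m=R\cap\gamma_{m+1}(F)$ and $B_m=\gamma_{m+1}(R,F)$, Theorem~\ref{thm5}(i) identifies $\mathbf{L}_1 T_m(G)=A_m/B_m$.

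Next I would introduce the comparison homomorphism
\[
a_n\colon \mathbf{L}_1 T_{n+1}(G)=A_{n+1}/B_{n+1}\longrightarrow A_n/B_n=\mathbf{L}_1 T_n(G),
\]
induced by the inclusions $A_{n+1}\subseteq A_n$ and $B_{n+1}\subseteq B_n$; these hold because $\gamma_{n+2}(F)\subseteq\gamma_{n+1}(F)$ and $B_{n+1}=[\,B_n,F\,]\subseteq B_n$, the latter since each $\gamma_{m+1}(R,F)$ is normal in $F$. Its image is $A_{n+1}B_n/B_n$, so its cokernel is $A_n/(A_{n+1}B_n)$. The heart of the argument is to show this cokernel is finitely generated. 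For this I would produce the map
\[
A_n/(A_{n+1}B_n)\longrightarrow \gamma_{n+1}(F)\big/\big(\gamma_{n+2}(F)B_n\big)
\]
induced by $A_n\subseteq\gamma_{n+1}(F)$, and verify it is injective: if $x\in A_n$ lies in $\gamma_{n+2}(F)B_n$, writing $x=uv$ with $u\in\gamma_{n+2}(F)$ and $v\in B_n\subseteq R$ forces $u=xv^{-1}\in R$, hence $u\in R\cap\gamma_{n+2}(F)=A_{n+1}$ and $x\in A_{n+1}B_n$. The target is a quotient of the finitely generated abelian group $\gamma_{n+1}(F)/\gamma_{n+2}(F)$, so $\Coker(a_n)$ embeds into a finitely generated abelian group and is therefore finitely generated.

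To finish the descent step I would record that each $\mathbf{L}_1 T_m(G)=A_m/B_m$ is abelian: for $x,y\in A_m$ one has $x\in R$ and $y\in\gamma_{m+1}(F)$, so $[x,y]\in[R,\gamma_{m+1}(F)]$, and a routine induction with the three subgroups lemma gives $[R,\gamma_{m+1}(F)]\subseteq\gamma_{m+2}(R,F)\subseteq B_m$. In particular $\Img(a_n)$ is normal, so the sequence
\[
1\to \Img(a_n)\to \mathbf{L}_1 T_n(G)\to \Coker(a_n)\to 1
\]
is available. Here $\Img(a_n)$ is a quotient of $\mathbf{L}_1 T_{n+1}(G)$, hence finitely generated by the inductive hypothesis, and $\Coker(a_n)$ is finitely generated by the previous paragraph; since an extension of a finitely generated group by a finitely generated group is finitely generated, $\mathbf{L}_1 T_n(G)$ is finitely generated, completing the step.

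The main obstacle is precisely the middle step, controlling $\Coker(a_n)=A_n/(A_{n+1}B_n)$. The subtle point is that $a_n$ is \emph{not} surjective in general, so one cannot merely transport finite generation across it; the content is that the obstruction to surjectivity lands inside a lower central quotient of the free group and is thus finitely generated. I would carefully recheck the commutator bookkeeping behind the inclusions $B_{n+1}\subseteq B_n$ and $[A_m,A_m]\subseteq B_m$, together with the classical freeness fact that $\gamma_{n+1}(F)/\gamma_{n+2}(F)$ is finitely generated for $F$ finitely generated free, since these elementary claims carry the whole argument. A more conceptual but technically heavier alternative would apply the comonadic long exact sequence attached to the short exact sequence of functors $1\to L_n\to T_{n+1}\to T_n\to 1$ with $L_n(G)=\gamma_{n+1}(G)/\gamma_{n+2}(G)$, exhibiting $\Coker(a_n)$ as a subgroup of $\mathbf{L}_0 L_n(G)$; I would prefer the explicit route above, as it avoids identifying $\mathbf{L}_0 L_n(G)$ and analysing the connecting homomorphism.
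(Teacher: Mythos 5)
Your proof is correct, and it arrives at the same structural conclusion as the paper's --- a short exact sequence $1 \to A \to \mathbf{L}_1 T_n(G) \to B \to 1$ with $A$ a quotient of $\mathbf{L}_1 T_{n+1}(G)$ and $B$ sitting inside a quotient of the finitely generated abelian group $\gamma_{n+1}(F)/\gamma_{n+2}(F)$ --- but by a genuinely different route. The paper works simplicially: it takes a free simplicial resolution $F_*$ with $F_0$ a finitely generated free group, forms the kernel $X_*$ of the epimorphism $T_{n+1}(F_*)\to T_n(F_*)$, and reads the sequence off the homotopy exact sequence $\pi_1(T_{n+1}(F_*))\to\pi_1(T_n(F_*))\to\pi_0(X_*)$, with $X_0=\gamma_{n+1}(F_0)/\gamma_{n+2}(F_0)$ supplying the finite generation of $B$. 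This is essentially the ``more conceptual but technically heavier alternative'' you set aside at the end, except that the paper never needs to identify $\mathbf{L}_0$ of the kernel functor or analyse a connecting homomorphism, since it only uses that $\pi_0(X_*)$ is a quotient of $X_0$. Your version instead works at the level of a single finitely generated free presentation via Theorem~\ref{thm5}(i), which forces you to verify by hand what the simplicial formalism gives for free: well-definedness of the comparison map $a_n$, normality of its image (via abelianness of the Baer invariants, i.e.\ $[R,\gamma_{m+1}(F)]\subseteq\gamma_{m+2}(R,F)$ from the three subgroups lemma), and the injective embedding of $\Coker(a_n)$ into $\gamma_{n+1}(F)/\bigl(\gamma_{n+2}(F)\gamma_{n+1}(R,F)\bigr)$. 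What your route buys is complete independence from the simplicial machinery --- everything happens inside $F$ --- at the cost of these commutator verifications, all of which are standard and which you carry out correctly.
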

\begin{proof} Let $F_*$ be a free simplicial resolution of $G$. Since $G$ is finitely generated, we can assume that
	$F_0$ is a finitely generated free group. The natural morphism $T_{n+1}(F_*) \to T_{n}(F_*)$ is an epimorphism of
	simplicial groups for each $n\geq 1$. Let
	\[
	X_* = \Ker \big\{ T_{n+1}(F_*) \to T_{n}(F_*) \big\} . 
	\]
	Then, we have the following exact sequence of homotopy groups:
	\[
	\pi_1(T_{n+1}(F_*)) \to \pi_1 (T_{n}(F_*)) \to \pi_0 (X_*),
	\]
	which yields the following short exact sequence of groups:
	\[
	1\to A \to \mathbf{L}_1 T_{n} (G) \to B \to 1,
	\]
	where $A$ is a quotient subgroup of $\mathbf{L}_1 T_{n+1} (G)$ and $B$ is a subgroup of  $\pi_0 (X_*)$. Since $F_0$ is
	finitely generated, $X_0=\gamma_{n+1}(F_0)/\gamma_{n+2}(F_0)$ is a finitely generated abelian group. Therefore,
	$\pi_0 (X_*)$ is a finitely generated abelian group. Hence, $B$ is finitely generated. Thus, using the exact sequence given
	above we get that if $\mathbf{L}_1 T_{n+1} (G)$ is finitely generated, then  $\mathbf{L}_1 T_{n} (G)$ is also finitely
	generated.  
\end{proof}

\section*{Acknowledgements}
	This work was supported by Ministerio de Economía y Competitividad (Spain), with grant number MTM2016-79661-P. The second author is a Postdoctoral Fellow of the Research Foundation–Flanders (FWO).
	
	We would kindly like to thank the referees for their helpful suggestions which enabled us to improve the manuscript.

\end{document}